\numberwithin{equation}{section}
\newcommand{\dualV}[2]{\langle #1, #2 \rangle_{V^*\times V}}
\newcommand{\dualVi}[2]{\langle #1, #2 \rangle_{V_{\xi_i}^*\times V_{\xi_i}}}
\newcommand{\dualX}[3][]{\langle #2,#3 \rangle_{#1^{*}\times #1}}
\newcommand{\inner}[3][]{( #2 , #3 )_{#1}}
\newcommand{\innerb}[3][]{\big(#2,#3\big)_{#1}}
\newcommand{\Vn}{V_{\xi_n}}
\newcommand{\VnOm}{V_{\xi_n(\omega)}}
\newcommand{\Vi}{V_{\xi_i}}
\newcommand{\mui}{\mu_{\xi_i}}
\newcommand{\kapi}{\kappa_{\xi_i}}
\newcommand{\lami}{\lambda_{\xi_i}}
\newcommand{\etan}{\eta_{\xi_n}}
\newcommand{\etai}{\eta_{\xi_i}}
\newcommand{\An}{A_{\xi_n}(t_n) }
\newcommand{\AnOm}{A_{\xi_n(\omega) }(t_n) }
\newcommand{\Ai}{A_{\xi_i}(t_i) }
\newcommand{\fn}{f^n }
\newcommand{\fii}{f^i }
\newcommand{\diff}[1]{\,\mathrm{d}#1}
\newcommand{\R}{\mathbb{R}}
\newcommand{\N}{\mathbb{N}}
\newcommand{\E}{\mathbb{E}}
\newcommand{\incl}{\hookrightarrow}
\newcommand{\F}{\mathcal{F}}
\renewcommand{\P}{\mathcal{P}}
\newcommand{\D}{\mathcal{D}}
\theoremstyle{plain}
\newtheorem{definition}{Definition}[section]
\newtheorem{theorem}[definition]{Theorem}
\newtheorem{lemma}[definition]{Lemma}
\newtheorem{assumption}{Assumption}
\theoremstyle{definition}
\newtheorem{remark}[definition]{Remark}
\begin{document}

\title[Randomized operator splitting]{A randomized operator splitting scheme inspired by stochastic optimization methods}

\author[M.~Eisenmann]{Monika Eisenmann}
\email{monika.eisenmann@math.lth.se}

\author[T.~Stillfjord]{Tony Stillfjord}
\email{tony.stillfjord@math.lth.se}

\address{  Centre for Mathematical Sciences\\
  Lund University\\
  P.O.\ Box 118\\
  221 00 Lund, Sweden}

\keywords{Nonlinear evolution equations, operator splitting, stochastic optimization, domain decomposition, randomized scheme.}
\subjclass[2010]{65C99, 65M12, 90C15, 65M55}

\thanks{
This work was partially supported by the Crafoord foundation through the grant 
number 
20220657 and by the Wallenberg AI, Autono\-mous Systems and Software Program 
(WASP) funded by the Knut and Alice Wallenberg Foundation. 
The computations were enabled by resources provided by the Swedish National 
Infrastructure for Computing (SNIC) at LUNARC partially funded by the Swedish 
Research Council through grant agreement no. 2018–05973.	
}

\begin{abstract}
  In this paper, we combine the operator splitting methodology for abstract evolution 
  equations with that of stochastic methods for large-scale optimization problems. The 
  combination results in a randomized splitting scheme, which in a given time step 
  does not necessarily use all the parts of the split operator. This is in contrast to 
  deterministic splitting schemes which always use every part at least once, and often 
  several times. As a result, the computational cost can be significantly decreased in 
  comparison to such methods.
  We rigorously define a randomized operator splitting scheme in an abstract setting 
  and provide an error analysis where we prove that the temporal convergence order of the scheme 
  is at least $1/2$. We illustrate the theory by numerical experiments on both 
  linear and quasilinear diffusion problems, using a randomized domain decomposition 
  approach. We conclude that choosing the randomization in certain ways may 
  improve the order to $1$. This is as accurate as applying e.g.\ backward (implicit) 
  Euler to the full problem, without splitting.
\end{abstract}

\maketitle

\section{Introduction}

The main objective of this paper is to combine two successful strategies from 
the literature: the first being operator splitting schemes for evolution equations on
general, infinite dimensional frameworks and the second being stochastic optimization methods. 
Operator splitting schemes are an established tool in the field of numerical analysis of 
evolution equations and have a wide range of applications. Stochastic optimization methods have proven to be efficient at solving large-scale optimization problems, where it is infeasible to evaluate full gradients. They can drastically decrease the computational cost in e.g.\ machine learning settings.
The link between these two seemingly disparate areas is that an iterative method applied to an optimization problem can also be seen as a time-stepping method applied to a gradient flow connected to the optimization problem. In particular, stochastic optimization methods can then be interpreted as randomized operator splitting schemes for such gradient flows.
In this context, we introduce a general randomized splitting method that can be applied directly to evolution equations, and provide a rigorous convergence analysis.

Abstract evolution equations of the type
\begin{align*}
  \begin{cases}
    u'(t) + A(t)u(t) = f(t), \quad t \in (0,T],\\
    u(0) = u_0
  \end{cases}
\end{align*}
are an important building block for modeling processes in physics, biology and social sciences. Standard examples which appear in a variety of applications are fluid flow problems, where we model how a flow evolves on a given 
domain over time, compare \cite{Aronsson.1996, Kuijper.2007} and 
\cite[Section~1.3]{Vazquez.2007}. The operator $A(t)$ can denote, for example, a 
non-linear diffusion operator such as the $p$-Laplacian or a porous medium operator.

Deterministic operator splitting schemes as discussed 
in more detail in \cite{HundsdorferVerwer.2003} are a powerful tool for this type of 
equation. An example is given by a domain 
decomposition scheme, where we split the domain into sub-domains. Instead of solving 
one expensive problem on the entire domain, we deal with cheaper problems on the 
sub-domains. This is particularly useful in modern computer architectures, as the 
sub-problems may often be solved in parallel. 

Moreover, evolution equations are tightly connected to unconstrained optimization problems,
because the solution of $\min_u F(u)$ is a stationary point of the gradient flow
$u'(t) = -\nabla F(u(t))$. The latter is an evolution equation on an infinite time horizon
with $A = -\nabla F$ and $f = 0$.
In the large-scale case, such optimization problems benefit from stochastic optimization 
schemes. The most basic such method, the stochastic gradient descent, was first introduced already in~\cite{RobbinsMonro.1951}, but since then it has been extended and generalized in many
directions. See, e.g., the review article~\cite{BottouEtAll.2018} and the references therein.

Via the gradient flow interpretation, we can see these optimization methods as time-stepping
schemes where a randomly chosen sub-problem is considered in each time step. In 
essence, it
is therefore a \emph{randomized} operator splitting scheme. The difference between 
the works
mentioned above and ours is that we apply these stochastic optimization techniques to solve
the evolution equation itself rather than just finding its stationary state.

We consider nonlinear evolution equations in an abstract framework similar to 
\cite{EisenmannHansen.2022, Emmrich.2009, EmmrichThalhammer.2010} where 
operators of a monotone type have been studied. Deterministic splitting schemes for such
equations has been considered in 
e.g.~\cite{HansenOstermann.2010,HansenStillfjord.2013,JakobsenKarlsen.2005,LionsMercier.1979}.
 A particular kind of splitting schemes
which is most closely related to our work, domain decomposition methods, have been studied in 
\cite{EisenmannHansen.2018, EisenmannHansen.2022, HansenHenningsson.2017, 
Mathew.2008, Mathew.1998}. In this paper, we extend this framework of deterministic 
splitting schemes to a setting of randomized methods.

Outside of the context of optimization, other kinds of randomized methods have 
already proved themselves to be useful for solving evolution equations. Starting in  
\cite{Stengle.1990, Stengle.1995} explicit schemes for ordinary differential equations have 
been randomized. This approach has been further extended in \cite{BochacikEtAl.2021, 
Daun.2011, JentzenNeuenkirch.2009, KruseWu.2017, KruseWu.2019}. 
In \cite{EKKL.2019}, it has been extended both to implicit methods and to 
partial differential equations and in \cite{KruseWu.2018} to finite element approximations. 
While these works considered certain randomizations in their schemes, they are 
conceptually different from our approach. Their main idea is to approximate any appearing integrals through
\begin{align*}
  \int_{t_{n-1}}^{t_n} f(t) \diff{t} \approx f(\xi_n)
  \quad \text{and} \quad 
  \int_{t_{n-1}}^{t_n} A(t)v \diff{t} \approx A(\xi_n) v,
\end{align*}
where $\xi_n$ is a random variable that takes on values in $[t_{n-1}, t_n]$. This ansatz
coincides with a Monte Carlo integration idea. 
In this paper, we use a different approach where we decompose the operator in a 
randomized fashion. More precisely, we approximate data
\begin{align*}
  f = \frac{1}{s}\sum_{\ell = 1}^{s} f_{\ell}
  \quad \text{and} \quad 
  A = \frac{1}{s}\sum_{\ell = 1}^{s} A_{\ell}
\end{align*}
by
\begin{align*}
  f_B = \frac{1}{|B|}\sum_{\ell \in B} f_{\ell}
  \quad \text{and} \quad 
  A_B = \frac{1}{|B|}\sum_{\ell \in B} A_{\ell}
\end{align*}
where the batch $B \subset \{1,\dots,s\}$ is chosen randomly. The stochastic 
approximations $f_B$ and $A_B$ of the original data $f$ and $A$ are cheaper to evaluate 
in applications.
This is less related to Monte Carlo integration and more similar to 
stochastic optimization methods, compare~\cite{BottouEtAll.2018, EisenmannEtAl.2022}.
Similar ideas have been considered in~\cite{JinEtAl.2020,JinEtAl.2021, LiEtAl.2020}, where
a random batch method for interacting particle systems has been studied. Moreover, very
recently and during the preparation of this work, a similar approach has also been applied
to the optimal control of linear time invariant (LTI) dynamical systems in~\cite{VeldmanZuazua.2022}.
While the convergence rate provided there is essentially the same as what we establish in
our main result Theorem~\ref{thm:convH}, our setting is more general and allows for nonlinear
operators on infinite dimensional spaces rather than finite dimensional matrices. We also consider the error of the time stepping method that is used to approximate the solution to $u'(t) + A_B(t)u(t) = f_B(t)$, while the error bounds in~\cite{VeldmanZuazua.2022} assume that this evolution equation is solved exactly.

This paper is organized as follows. In Section~\ref{sec:setting}, we begin by explaining 
our abstract framework. This includes both the precise assumptions that we make and 
the definition of our time-stepping scheme. We give a more concrete application of the 
abstract framework in Section~\ref{sec:example}. With the setting fixed, we first prove in Section~\ref{sec:wellDef} that the scheme and its solution are indeed well-defined.
We prove the convergence of the scheme in expectation in Section~\ref{sec:convH}. These
theoretical convergence results are illustrated by numerical experiments with 
two-dimensional linear and quasilinear
nonlinear and linear diffusion problem in Section~\ref{sec:numExperiments}. Finally, we collect some
more technical auxiliary results in Appendix~\ref{sec:appendix}.  

\section{Setting} \label{sec:setting}

In the following, we introduce a theoretical framework for the randomized operator 
splitting. This setting is similar to the one in~\cite{EisenmannHansen.2022}. 

\begin{assumption}\label{ass:spaces}
  Let $(H,\inner[H]{\cdot}{\cdot},\|\cdot\|_H)$ be a real, separable Hilbert space and let $(V, 
  \|\cdot\|_V)$ be a real, separable, reflexive Banach space, which is continuously and 
  densely embedded into $H$. 
  Moreover, there exists a semi-norm $|\cdot|_V$ on $V$.
\end{assumption}

Denoting the dual space of $V$ by $V^*$ and identifying the Hilbert space $H$ with its 
dual space, the spaces from Assumption~\ref{ass:spaces} form a Gelfand triple and 
fulfill, in particular,
\begin{align*}
  V \overset{d}{\incl} H \cong H^* \overset{d}{\incl}  V^*.
\end{align*}

\begin{assumption}\label{ass:A}
  Let the spaces $H$ and $V$ be given as stated in  
  Assumption~\ref{ass:spaces}.
  Furthermore, for $T \in (0,\infty)$ as well as $p \in [2,\infty)$, let 
  $\{A(t)\}_{t \in [0,T]}$ be a family of operators $A(t) \colon V \to V^*$ that
  satisfy the following conditions:
  \begin{enumerate}
    \item[(i)] The mapping $Av \colon [0,T] \to V^*$ given by $t \mapsto 
    A(t)v$ is continuous almost everywhere in $(0,T)$ for all $v \in V$.
    \item[(ii)] The operator $A(t) \colon V \to V^*$, $t \in [0,T]$, is    
    radially continuous, i.e., the mapping $s \mapsto \dualV{A(t)(v+s w)}{w}$ 
    is continuous on $[0,1]$ for all $v,w \in V$.
    \item[(iii)] For $\kappa_A \in [0,\infty)$, the operator $A(t) + \kappa_A I \colon V \to 
    V^*$, $t \in [0,T]$, fulfills a monotonicity-type condition in the sense that there exists 
    $\eta_A \in [0,\infty)$, which does not depend on $t$, such that
    \begin{align*}
      \dualV{A(t)v - A(t)w}{v - w} + \kappa_A \|v - w\|_H^2 \geq \eta_A | v - w |_V^p
    \end{align*}
    is fulfilled for all $v,w \in V$.
    \item[(iv)] The operator $A(t) \colon V \to V^*$, $t \in [0,T]$, is uniformly 
    bounded such that there exists $\beta_A \in [0,\infty)$, which does not 
    depend on $t$, with
    \begin{align*}
      \|A(t) v \|_{V^*} \leq \beta_A \big(1 + \|v\|_V^{p-1}\big)
    \end{align*}
    for all $v \in V$.
    \item[(v)] The operator $A(t) + \kappa_A I \colon V \to V^*$, $t \in [0,T]$, fulfills a     
    uniform semi-coercivity condition such that there exist $\mu_A, \lambda_A  \in 
    [0,\infty)$, which do not depend on $t$, with
    \begin{align*}
      \dualV{A(t) v}{v}+ \kappa_A \|v\|_H^2  + \lambda_A \geq  \mu_A |v|_V^p
    \end{align*}
    for all $v \in V$.
  \end{enumerate}
\end{assumption}

\begin{assumption}\label{ass:u0F}
  The function $f$ is an element of the Bochner space $L^2(0,T;H)$, and the initial value $u_0 \in H$, where $H$ is the Hilbert space from Assumption~\ref{ass:spaces}.
\end{assumption}

Assumption~\ref{ass:spaces}--\ref{ass:u0F}, are requirements on the problem that 
we want to solve. The following 
Assumptions~\ref{ass:xis}--\ref{ass:expectation} 
are needed to state the approximation scheme for the given problem.

\begin{assumption}\label{ass:xis}
  Let $(\Omega, \F, \P)$ be a complete probability space and let $\{\xi_n\}_{n \in \N}$ 
  be a family of mutually independent random variables.   
  Further, let the filtration $\{\F_n\}_{n \in \N}$ be given by
  \begin{align*}
    &\F_0 := \sigma\big(\mathcal{N} \in \F : \P(\mathcal{N}) = 0 \big)\\
    &\F_n := \sigma\big(\sigma\big(\xi_i : i \in \{1,\dots,n\}\big) \cup \F_0 \big),
    \quad n \in \N,
  \end{align*}
  where $\sigma$ denotes the generated $\sigma$-algebra.
\end{assumption}  

In the following, we denote the expectation with respect to the probability distribution 
of $\xi$ for a random variable $X$ in the Bochner space $L^1(\Omega; H)$ by 
$\E_{\xi}[X]$. Moreover, 
we abbreviate the total expectation by
\begin{align*}
  \E_n [X] = \E_{\xi_1}[\E_{\xi_2}[ \dots \E_{\xi_n}[X] \dots]].
\end{align*}
We denote the space of H\"older continuous functions on $[0,T]$ with H\"older 
coefficient $\gamma \in (0,1)$ and values in $H$ by $C^{\gamma}([0,T];H)$. For 
notational convenience we include the case $\gamma = 1$ and denote the space of 
Lipschitz continuous functions by $C^{1}([0,T];H)$.

\begin{assumption}\label{ass:expectation}
  Let Assumptions~\ref{ass:spaces}--\ref{ass:xis} be fulfilled.
  Assume that for almost every $\omega \in \Omega$, there exists a real Banach 
  space $V_{\xi(\omega)}$ such that $V \stackrel{d}{\incl} V_{\xi(\omega)} 
  \stackrel{d}{\incl} H$, $\bigcap_{\omega \in \Omega} V_{\xi(\omega)} = V$ and there 
  exists a  semi-norm $|\cdot|_{V_{\xi(\omega)}}$. 
  Moreover, the mapping from $\omega \mapsto V_{\xi(\omega)}$ is measurable in the 
  sense that for every $v \in H$ the set $\{ \omega \in \Omega: v \in V_{\xi(\omega)}\}$ 
  is an element of the complete generated $\sigma$-algebra 
  \begin{align*}
    \F_{\xi} := \sigma\big(\sigma (\xi ) \cup \sigma\big(\mathcal{N} \in \F : 
    \P(\mathcal{N}) = 0 \big) \big).
  \end{align*}
  Further, let the family of operators $\{A_{\xi(\omega)}(t)\}_{\omega \in \Omega, t \in 
  [0,T]}$ be such that for almost every $\omega \in \Omega$,  $\{A_{\xi(\omega)}(t)\}_{t 
  \in [0,T]}$ fulfills Assumption~\ref{ass:A} with the spaces $V_{\xi(\omega)}$, $H$ and 
  $V_{\xi(\omega)}^*$ and corresponding constants $\kappa_{\xi(\omega)}$, 
  $\eta_{\xi(\omega)}$, $\beta_{\xi(\omega)}$, $\mu_{\xi(\omega)}$ and 
  $\lambda_{\xi(\omega)}$. 
  Moreover, the mapping $A_{\xi}(t) v \colon \Omega \to V^*$ is $\F_{\xi}$-measurable 
  and the equality $\E_{\xi} [ A_{\xi}(t) v ] = A(t) v$ is fulfilled in $V^*$ for $v \in V$.
  The mappings $\kappa_{\xi}, \eta_{\xi}, \mu_{\xi}, \beta_{\xi}, \lambda_{\xi} \colon 
  \Omega \to [0,\infty)$ are measurable and there exist $\kappa, \lambda \in [0,\infty)$ 
  which fulfill $\kappa_{\xi} \leq \kappa$ almost surely and $\E_{\xi} 
  \big[\lambda_{\xi}  \big] \leq \lambda$.
  
  Further, let the family $\{f_{\xi(\omega)}\}_{\omega \in \Omega}$ be given such that 
  $f_{\xi(\omega)} \in L^2(0,T; H)$. Moreover, the mapping $f_{\xi}(t) \colon \Omega \to 
  H$ is $\F_{\xi}$-measurable and $\E_{\xi} [ f_{\xi}(t) ] = f(t)$ is fulfilled in 
  $H$ for almost all $t \in (0,T)$.
\end{assumption}

Under the setting explained in the above assumptions, we consider the initial value 
problem
\begin{align}
  \label{eq:equation}
  \begin{cases}
    u'(t) + A(t)u(t) = f(t)\quad&\text{in } V^*, \quad t \in (0,T],\\
    u(0) = u_0 &\text{in } H.
  \end{cases}
\end{align}
For a non-uniform temporal grid $0 = t_0 <t_1 < \dots < t_N = T$, a step size $h_n = 
t_n - t_{n-1}$, $h = \max_{n \in \{1,\dots,N\}} h_n$, and a family of random variables 
$\{f^n\}_{n \in \{1,\dots,N\}}$ such that $f^n\colon \Omega \to H$ is 
$\F_{\xi_n}$-measurable, we consider the scheme
\begin{align}
  \label{eq:scheme}
  \begin{cases}
    U^n - U^{n-1} + h_n \An U^n = h_n \fn \quad &\text{in } V_{\xi_n}^*,  \quad n \in 
    \{1,\dots,N\},\\
    U^0 = u_0 &\text{in } H.
  \end{cases}
\end{align}
Note that $U^n \colon \Omega \to H$ is a random variable and therefore some 
statements involving it below only hold almost surely. Whenever there is no risk of 
misinterpretation, we 
omit writing almost surely for the sake of brevity.

When proving that the scheme is well-defined and establishing an a priori bound, it is 
sufficient to assume that $\{f_{\xi_n}\}_{n \in \{1,\dots,N\}}$ are integrable with respect 
to 
the temporal parameter. In that case, we can choose for example
\begin{align}
  \label{eq:randomData1}
  \fn = \frac{1}{h_n} \int_{t_{n-1}}^{t_n} f_{\xi_n}(t) \diff{t} \quad \text{in } H
  \text{ almost surely.}
\end{align}
In our error bounds, we assume more regularity for the functions $\{f_{\xi_n}\}_{n \in 
\{1,\dots,N\}}$ and demand continuity with respect to the temporal parameter. In this 
case, we may also use 
\begin{align}
  \label{eq:randomData2}
  \fn = f_{\xi_n}(t_n) \quad \text{in } H \text{ almost surely.}
\end{align}
We will focus on this second choice for the error bounds in Section~\ref{sec:convH}.

\section{Application: Domain decomposition}
\label{sec:example}

One main application that is allowed by our abstract framework is a domain 
decomposition scheme for a nonlinear fluid flow problem. Domain decomposition 
schemes are well-known for deterministic operator splittings. However,  to the best of 
our knowledge, it has not been studied in the context of a randomized operator 
splitting scheme.

\subsection{Deterministic domain decomposition}
To exemplify our abstract equation \eqref{eq:equation}, we consider a (nonlinear) parabolic differential 
equation. In the following, let $\D \subset \R^d$, $d \in \N$, be a bounded
domain with a Lipschitz boundary $\partial \D$. For $p \in [2, \infty)$, we 
consider the parabolic $p$-Laplacian with homogeneous Dirichlet boundary conditions
\begin{align}\label{eq:parabolicPDE}
  \begin{cases}
    \partial_t u(t,x) - \nabla \cdot (\alpha(t) |\nabla u(t,x)|^{p-2}\nabla u(t,x))  = 
    \tilde{f}(t,x), \quad &(t,x) \in (0,T) \times \D,\\
    u(t,x) = 0,
    &(t,x) \in (0,T) \times \partial \D,\\
    u(0,x) = u_0(x), &x \in \D,
  \end{cases}
\end{align}
for $\alpha \colon [0,T] \to \R$ and $u_0 \colon \D \to \R$. The notation $\tilde{f}$ is used to differentiate between the function $\tilde{f} \colon (0,T) \times  \D \to \R$ and the abstract function $f$ on $(0,T)$ that it gives rise to through $[f(t)](x) = \tilde{f}(t,x)$.
We consider a domain decomposition 
scheme similar to \cite{HansenHenningsson.2017} for $p = 2$ and to \cite{EisenmannHansen.2018, 
EisenmannHansen.2022} for $p \in [2,\infty)$. For the sake of completeness, we 
recapitulate the setting here also with a different boundary condition. 

For $s \in \N$, let $\{ \D_{\ell} \}_{\ell =1}^{s}$ be a family of overlapping 
subsets of $\D$. Let each subset have a Lipschitz boundary and let the union
of them fulfill $\bigcup_{\ell =1}^s \D_{\ell} = \D$.
On the sub-domains $\{ \D_{\ell} \}_{\ell =1}^{s}$, let the partition of unity 
$\{\chi_{\ell} \}_{\ell =1}^{s}\subset W^{1,\infty}(\D)$ be given such that the following 
criteria are fulfilled
\begin{align*}
  \chi_{\ell} (x)>0\text{ for all }x\in\D_{\ell},
  \quad
  \chi_{\ell} (x) = 0\text{ for all }x\in\D\setminus\D_{\ell},
  \quad
  \sum_{\ell =1}^{s} \chi_{\ell}= 1
\end{align*}
for $\ell \in \{1,\dots,s\}$. 
With the help of the functions $\{\chi_{\ell}\}_{\ell \in \{1,\dots,s\}}$, it is now possible to 
introduce suitable 
functional spaces $\{V_{\ell}\}_{\ell \in \{1,\dots,s\}}$. We use the weighted Lebesgue 
space $L^p(\D_{\ell},\chi_{\ell})^d$ that consists of all measurable functions $v = 
(v_1,\dots,v_d) \colon \D_{\ell} \to \R^d$ such that
\begin{align*}
  \|(v_1,\ldots,v_{d})\|_{L^p(\D_{\ell},\chi_{\ell})^d}
  = \Big(\int_{\D_{\ell}}\chi_{\ell} |(v_1,\ldots,v_{d})|^p 
  \diff{x}\Big)^{\frac{1}{p}}
\end{align*}
is finite. 
In the following, let the pivot space $\left(H, \inner[H]{\cdot}{\cdot}, \|\cdot\|_H \right)$ be 
the space $L^2(\D)$ of square integrable functions on $\D$ with the usual norm 
and inner product. The spaces $V$ and $V_{\ell}$, $\ell \in \{1,\dots,s\}$, are given by
\begin{align*}
  V = \text{clos}_{\|\cdot \|_{V}} \big(C_0^{\infty}(\D)\big) = W_0^{1,p}(\D) \quad \text{and} 
  \quad 
  V_{\ell} 
  = \text{clos}_{\|\cdot \|_{V_{\ell}}} \big(C_0^{\infty}(\D)\big), 
\end{align*}
with respect to the norms
\begin{align}
  \label{eq:normsV} 
  \|\cdot\|_{V}
  = \|\cdot\|_H + \|\nabla \cdot\|_{L^p(\D)^d}
  \quad\text{and}\quad
  \|\cdot\|_{V_{\ell}}
  = \|\cdot\|_H + \| \nabla \cdot \|_{L^p(\D_{\ell},\chi_{\ell})^d}
\end{align}
and semi-norms
\begin{align*}
  |\cdot |_{V}
  = \|\nabla \cdot\|_{L^p(\D)^d}
  \quad\text{and}\quad
  |\cdot|_{V_{\ell}}
  = \| \nabla \cdot \|_{L^p(\D_{\ell},\chi_{\ell})^d}.
\end{align*}
Note that a bootstrap argument involving the Sobolev embedding theorem 
shows that 
the norm given in \eqref{eq:normsV} is equivalent to the standard norm in the space.
We can now introduce the operators $A(t) \colon V \to V^*$, 
$A_{\ell}(t) \colon V_{\ell} \to V^*_{\ell}$, $\ell\in \{ 1,\dots,s\}$, $t\in [0,T]$, given by
\begin{align*} 
  \dualV{A(t) u}{v} &= \int_{\D} \alpha(t) |\nabla u|^{p-2} \nabla u \cdot 
  \nabla v \diff{x},
  \quad u,v\in V, \\
  \dualX[V_{\ell}]{A_{\ell}(t) u}{v}
  &= \int_{\D_{\ell}} \chi_{\ell} \alpha(t) |\nabla u|^{p-2} \nabla u \cdot 
  \nabla v \diff{x},
  \quad u,v\in V_{\ell}.
\end{align*}
Similarly, we define the right-hand sides $f_{\ell} \colon [0,T] \to H$, $\ell \in 
\{1,\dots,s\}$, where $f_{\ell}(t) = \chi_{\ell} f(t)$ in $H$ for almost every $t \in (0,T)$.

\begin{lemma} \label{lem:example}
  Let the parameters of the equation \eqref{eq:parabolicPDE} be given such that $\alpha 
  \in C([0,T];\R)$, $u_0 \in L^2(\D)$ and $\tilde{f} \in L^2((0,T) \times \D)$.
  Then the setting described above fulfills Assumptions~\ref{ass:spaces}--\ref{ass:u0F}.

  Let the partition of unity $\{\chi_{\ell} \}_{\ell =1}^{s}\subset W^{1,\infty}(\D)$ 
  fulfill that for every function $\chi_{\ell}$ there exists $\varepsilon_0 \in (0,\infty)$ 
  such that $\D_{\ell}^{\varepsilon} = \{ x\in \D_{\ell} : \chi_{\ell}(x) \geq \varepsilon \}$
  is a Lipschitz domain for all $\varepsilon \in (0,\varepsilon_0)$. 
  Then $V$ and $V_{\ell}$, $\ell \in \{1,\dots,s\}$, are reflexive Banach spaces and $V = 
  \bigcap_{\ell = 1}^s V_{\ell}$.
  Further, the family of operators $\{A_{\ell}(t)\}_{t \in [0,T]}$, $\ell \in 
  \{1,\dots,s\}$ fulfills 
  Assumption~\ref{ass:A} with the spaces $V_{\ell}$, $H$ and $V_{\ell}^*$. 
  Moreover, $\sum_{\ell = 1}^{s} A_{\ell}(t) v = A(t) v$ is fulfilled in $V^*$ for $v \in V$ 
  for almost every $t \in (0,T)$ and 
  corresponding constants $\kappa_A = \kappa_{\ell} = \lambda_A = \lambda_{\ell} 
  = 0$, $\mu_A = \mu_{\ell} = \eta_A = \eta_{\ell} = 1$.
  
  Finally, the family $\{f_{\ell}\}_{\ell \in \{1,\dots,s\}}$ fulfills
  $f_{\ell} \in L^2(0,T; H)$ and $\sum_{\ell = 1}^{s} f_{\ell}(t) = f(t)$ in 
  $H$ for almost all $t \in (0,T)$. 
\end{lemma}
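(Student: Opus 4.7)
The proof breaks naturally into three blocks: (a) verifying Assumptions~\ref{ass:spaces}--\ref{ass:u0F} for the global data $(H,V,A,f)$; (b) establishing the functional-analytic properties of the local spaces $V_\ell$, in particular reflexivity and the identification $V=\bigcap_\ell V_\ell$; (c) verifying Assumption~\ref{ass:A} for each $A_\ell(t)$ and proving the additive decomposition of the data. Much of (a) and (c) mirrors computations already carried out in \cite{EisenmannHansen.2018,EisenmannHansen.2022}; I would cite those references where applicable and focus the exposition on (b).

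First I would verify (a). The pivot space $H=L^2(\D)$ is clearly a separable Hilbert space, and $V=W_0^{1,p}(\D)$ is a separable, reflexive Banach space with $V\stackrel{d}{\incl} H$, the norm equivalence coming from Poincar\'e's inequality combined with a Sobolev bootstrap. The $p$-Laplace structure of $A(t)$ gives its radial continuity, the monotonicity inequality with $\eta_A=1$, $\kappa_A=0$, the growth bound with $\beta_A$ depending on $\|\alpha\|_{C([0,T])}$, and the semi-coercivity with $\mu_A=1$, $\lambda_A=\kappa_A=0$ via the standard vector inequalities for $|\xi|^{p-2}\xi$ (see \cite[Ch.~5]{Vazquez.2007} or \cite{EisenmannHansen.2022}); continuity in $t$ follows from $\alpha\in C([0,T];\R)$. (The stated constant values tacitly absorb the sign of $\alpha$, which I would make explicit by noting the argument requires $\alpha\ge c>0$ uniformly, or else by stating the constants in terms of $\min_t\alpha(t)$.) Assumption~\ref{ass:u0F} is immediate from $u_0\in L^2(\D)$ and $\tilde f\in L^2((0,T)\times\D)\cong L^2(0,T;H)$.

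The central step is (b). The inclusion $V\subset\bigcap_\ell V_\ell$ is trivial since $\chi_\ell\le 1$ yields $\|v\|_{V_\ell}\le\|v\|_V$. For the reverse inclusion I would argue as in \cite{EisenmannHansen.2018}: for $v\in\bigcap_\ell V_\ell$ and $\varepsilon\in(0,\varepsilon_0)$, the Lipschitz-domain hypothesis on $\D_\ell^{\varepsilon}$ gives $\chi_\ell^{1/p}\nabla v\in L^p(\D_\ell)^d$, hence $\nabla v\in L^p(\D_\ell^\varepsilon)^d$ uniformly in $\varepsilon$; letting $\varepsilon\downarrow0$ and using the partition of unity $\sum_\ell\chi_\ell=1$ gives $|\nabla v|^p\in L^1(\D)$, so $v\in W^{1,p}(\D)$. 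The zero boundary trace is inherited from the defining closures. Reflexivity of $V_\ell$ I would obtain by embedding $v\mapsto(v,\chi_\ell^{1/p}\nabla v)$ isometrically into the reflexive product $H\times L^p(\D_\ell)^d$ and noting that the image is closed, so $V_\ell$ is isomorphic to a closed subspace of a reflexive space and hence reflexive. Separability follows from the density of $C_0^\infty(\D)$ in the defining norms, and the chain $V\stackrel{d}{\incl}V_\ell\stackrel{d}{\incl} H$ again from $\chi_\ell\le 1$ and the density of $C_0^\infty(\D)$ in $L^2(\D)$.

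For (c), each $A_\ell(t)$ is the weighted $p$-Laplacian on $V_\ell$, so the same pointwise vector inequalities that handle $A(t)$ apply here with the weight $\chi_\ell$ integrated against them, giving radial continuity, monotonicity ($\eta_\ell=1$), growth ($\beta_\ell$ depending on $\alpha$), and semi-coercivity ($\mu_\ell=1$) with $\kappa_\ell=\lambda_\ell=0$, while continuity in $t$ again comes from $\alpha\in C([0,T];\R)$. The additivity identity follows from a single computation: for $u\in V$, $w\in V$,
\begin{align*}
\sum_{\ell=1}^s\dualX[V_{\ell}]{A_\ell(t)u}{w}
&=\int_\D\alpha(t)|\nabla u|^{p-2}\nabla u\cdot\nabla w\,\sum_{\ell=1}^s\chi_\ell\diff{x}
=\dualV{A(t)u}{w},
\end{align*}
so $\sum_\ell A_\ell(t)v=A(t)v$ in $V^*$. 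Finally $f_\ell=\chi_\ell f\in L^2(0,T;H)$ because $\chi_\ell\in L^\infty$, and $\sum_\ell f_\ell=f$ by the partition of unity. The main obstacle is the identification $V=\bigcap_\ell V_\ell$ together with the preservation of the zero boundary trace, which is exactly where the auxiliary Lipschitz-domain hypothesis on $\D_\ell^\varepsilon$ is needed; everything else is a rather direct adaptation of the standard monotone-operator analysis of the $p$-Laplacian.
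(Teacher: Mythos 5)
Your proposal is correct and follows essentially the same route as the paper's proof: the trivial inclusion $V\subseteq\bigcap_{\ell}V_{\ell}$ from $\chi_{\ell}\le 1$, the reverse inclusion via the Lipschitz level sets $\D_{\ell}^{\varepsilon}$ on which the weighted and unweighted Sobolev spaces coincide, combined with the trace argument and the partition of unity, and the standard monotone-operator estimates for Assumption~\ref{ass:A} (which the paper simply defers to the cited works of Eisenmann and Hansen, so your explicit reflexivity and additivity computations just fill in those citations). Your side remark that the stated constants $\mu_A=\eta_A=1$ tacitly presuppose a positive lower bound on $\alpha$ is a fair observation, but it does not alter the structure of the argument.
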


\begin{proof}
  The space $H = L^2(\D)$ is a real, separable Hilbert space, while $V = 
  W_0^{1,p}(\D)$ is a real, separable Banach space that is densely embedded into 
  $H$. Thus, they fulfill Assumption~\ref{ass:spaces}. 
  Analogously to \cite[Lemma~3]{EisenmannHansen.2018}, the spaces $V$ and 
  $V_{\ell}$, $\ell \in \{1,\dots,s\}$, are reflexive Banach spaces and since 
  $C_0^{\infty}(\D)$ is dense in $H$ and $C_0^{\infty}(\D) \subseteq V \subset 
  V_{\ell}$ it follows that $V$ and $V_{\ell}$ are dense in $H$.
  It remains to prove that $\bigcap_{\ell = 1}^s V_{\ell} = V$ is fulfilled. First, we notice 
  that $\|w\|_{L^p( \D_{\ell},\chi_{\ell})^d} \leq \|w\|_{L^p(\D)^d}$ for every $w \in 
  L^p(\D)^d$. Thus, it follows that $V \subseteq V_{\ell}$ for every $\ell \in 
  \{1,\dots,s\}$ and in particular $V \subseteq \bigcap_{\ell = 1}^s V_{\ell}$. The other 
  inclusion $\bigcap_{\ell = 1}^s V_{\ell} \subseteq V$ requires more attention. 
  For $\varepsilon \in (0,\infty)$, we introduce the set $\D_{\ell}^{\varepsilon}
  = \{ x \in \D : \chi_{\ell}(x) \geq \varepsilon \}$.
  By assumption the sets $\D_{\ell}^{\varepsilon}$ have Lipschitz boundary for 
  $\varepsilon$ small enough. We consider the spaces of restricted 
  functions
  \begin{align*}
    C_0^{\infty}(\D)|_{\D_{\ell}^{\varepsilon}}
    = \{ u \in C^{\infty}(\D_{\ell}^{\varepsilon})
    : u|_{\partial \D_{\ell}^{\varepsilon} \cap \partial \D} = 0\}
  \quad \text{and} \quad
    V_{\ell}^{\varepsilon}
    = \{ u|_{\D_{\ell}^{\varepsilon}} : u \in V_{\ell} \}.
  \end{align*}
  If a weight function $\chi_{\ell}$ fulfills $0 < \varepsilon < \chi_{\ell} \leq 
  1 <\infty$ on the whole domain $\D$, it follows that the weighted Lebesgue space 
  $L^p(\D_{\ell}^{\varepsilon},\chi_{\ell})^d$ coincides with the space 
  $L^p(\D_{\ell}^{\varepsilon})^d$ (see, e.g., \cite[Chapter~3]{Kufner.1980}). Thus, 
  we obtain 
  $V_{\ell}^{\varepsilon}= W^{1,p}(\D_{\ell}^{\varepsilon})$. The continuity of the trace 
  operator (see, e.g., \cite[Theorem~15.23]{Leoni.2009}), implies that
  \begin{align*}
    \overline{C_0^{\infty}(\D)|_{\D_{\ell}^{\varepsilon}}
    }^{\|\cdot\|_{V_{\ell} }}
    = \{ u \in W^{1,p}(\D_{\ell}^{\varepsilon}) :
    u|_{\partial \D_{\ell}^{\varepsilon} \cap \partial \D} = 0\}.
  \end{align*}
  This shows that $u \in V_{\ell}$ is zero on $\partial \D_{\ell}^{\varepsilon}
  \cap \partial \D$ for every $\varepsilon \in (0,\infty)$ small enough. As 
  $\varepsilon$ can be chosen arbitrarily small, it follows that $u \in V_{\ell}$ fulfills
  $v|_{\partial \D \cap \partial \D_{\ell}} = 0$.
  In combination with \cite[Lemma~1]{EisenmannHansen.2018}, we obtain that 
  $\bigcap_{\ell = 1}^{s} V_{\ell} = W^{1,p}_0(\D) = V$.
  
  Similar to the argumentation of \cite[Lemma~4]{EisenmannHansen.2018}, it follows that 
  the families of operators $\{A(t)\}_{t \in [0,T]}$ and $\{A_{\ell}(t)\}_{t \in [0,T]}$, 
  $\ell \in \{1,\dots,s\}$, fulfills Assumption~\ref{ass:A} with respect to the 
  corresponding spaces with $\kappa_A = \kappa_{\ell} = \lambda_A = \lambda_{\ell} 
  = 0$, $\mu_A = \mu_{\ell} = \eta_A = \eta_{\ell} = 1$.
  
  Assumption~\ref{ass:u0F} is fulfilled as $\tilde{f} \in L^2((0,T) \times \D)$
  means that the abstract function $f$ belongs to $L^2(0,T;L^2(\D))$. Thus, as 
  $\chi_{\ell} \in W^{1,\infty}(\D)$, it follows that $f_{\ell} = \chi_{\ell} f \in L^2(0,T;H)$ 
  and $\sum_{\ell = 1}^{s} f_{\ell}(t) = f(t)$ in $H$ for almost every $t \in (0,T)$.
\end{proof}

\subsection{Randomized scheme}

For a randomized splitting in combination with a domain decomposition, different 
approaches can be applied. One possibility is to choose a random support of the weight 
functions $\{\chi_{\ell}\}_{\ell \in \{1,\dots,s\}}$. This could possibly be done efficiently
using priority queue techniques similar to those in \cite{StoneGeigerLord.2017}.
In this paper, we instead fix the weight functions, but choose a random part 
of the operator in every time step. For the operator $A(t) = \sum_{\ell = 1}^{s} A_{\ell}(t)$ 
and a right hand side $f(t) = \sum_{\ell = 1}^{s} f_{\ell}(t)$, we 
introduce a random variable $\xi \colon \Omega \to 2^{\{1, \dots, s\}}$ such that 
$[A_{\xi}(t)](\omega) = \sum_{\ell \in \xi(\omega)} A_{\ell}(t) / \tau_{\ell}$ and 
$[f_{\xi}(t)](\omega) = \sum_{\ell \in \xi(\omega)} f_{\ell}(t) / \tau_{\ell}$ with
\begin{equation*}
  \tau_{\ell} = 
  \sum_{ B \in 2^{\{1, \dots, s\}} : \ \ell \in B} \P(\Omega_{\xi  = B})
  \quad \text{with} \quad \Omega_{\xi  = B} = \{ \omega \in \Omega : \xi(\omega) = B\}.
\end{equation*}
Here $\tau_{\ell}$ is the proper scaling factor which ensures that $\E_{\xi} [A_{\xi}(t)] = 
A(t)$ and $\E_{\xi} [f_{\xi}(t)] = f(t)$. We tacitly 
assume that $\tau_{\ell} > 0$, because otherwise we would be in a situation where at 
least one $A_{\ell}(t)$ is never chosen. Such a strategy would obviously not work.
We set $V_{\xi(\omega)} = \bigcap_{\ell \in \xi(\omega)} V_{\ell}$.

\begin{lemma}
  Let $\{\xi_n\}_{n \in \{1,\dots,N\}}$ fulfill Assumption~\ref{ass:xis} such that $\xi_n 
  \colon \Omega \to 2^{\{1,\dots,s\}}$ and $\xi_n^{-1}(B) \in \F_{\xi_n}$ for all $B 
  \subset 2^{\{1,\dots,s\}}$ and $n \in \{1,\dots,N\}$.
  Under the setting above, Assumption~\ref{ass:expectation} is fulfilled.
\end{lemma}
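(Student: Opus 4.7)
The plan exploits the fact that $\xi$ takes only finitely many values (subsets $B \subseteq \{1,\ldots,s\}$), so every claim in Assumption~\ref{ass:expectation} reduces to a finite case distinction indexed by $B$ on the $\F_\xi$-measurable sets $\xi^{-1}(B)$. First, I would define the space $V_B = \bigcap_{\ell \in B} V_\ell$ with norm $\|v\|_{V_B} = \max_{\ell \in B} \|v\|_{V_\ell}$ and semi-norm $|v|_{V_B}^p = \sum_{\ell \in B} |v|_{V_\ell}^p$. Repeating essentially the argument of Lemma~\ref{lem:example}, $V_B$ is a real, separable, reflexive Banach space with $V \stackrel{d}{\incl} V_B \stackrel{d}{\incl} H$, since $C_0^\infty(\D)$ is a common dense subset of $V$, $V_B$, and $H$.

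Next, the intersection identity $\bigcap_{\omega \in \Omega} V_{\xi(\omega)} = V$ follows: one inclusion is immediate from $V = \bigcap_{\ell = 1}^s V_\ell \subseteq V_B$, while the converse uses the assumption $\tau_\ell > 0$ so that every index $\ell \in \{1,\ldots,s\}$ lies in some realization $\xi(\omega) = B$. Measurability of $\omega \mapsto V_{\xi(\omega)}$ reduces to
\[
  \{\omega \in \Omega : v \in V_{\xi(\omega)}\} = \bigcup_{B \,:\, v \in V_B} \xi^{-1}(B),
\]
a finite union of sets in $\F_\xi$ by hypothesis.

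For the operator properties, I would verify Assumption~\ref{ass:A} for $A_B(t) = \sum_{\ell \in B} A_\ell(t)/\tau_\ell$ on $V_B$ using the constants $\kappa_\ell = \lambda_\ell = 0$, $\eta_\ell = \mu_\ell = 1$ from Lemma~\ref{lem:example}. Items (i) and (ii) are inherited from each $A_\ell$ by linearity of the sum. For the monotonicity (iii), for $v, w \in V_B$ I compute
\[
  \dualX[V_B]{A_B(t) v - A_B(t) w}{v - w} = \sum_{\ell \in B} \frac{1}{\tau_\ell} \dualX[V_\ell]{A_\ell(t) v - A_\ell(t) w}{v-w} \geq \frac{1}{\max_{\ell \in B} \tau_\ell} |v - w|_{V_B}^p,
\]
with analogous manipulations handling boundedness (iv) and semi-coercivity (v). This produces measurable constants $\kappa_\xi = \lambda_\xi = 0$, trivially satisfying $\kappa_\xi \leq \kappa := 0$ and $\E_\xi[\lambda_\xi] \leq \lambda := 0$.

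Finally, $A_\xi(t) v$ and $f_\xi(t)$ are simple random variables supported on the $\F_\xi$-measurable sets $\xi^{-1}(B)$, hence $\F_\xi$-measurable; the integrability $f_{\xi(\omega)} \in L^2(0,T;H)$ is inherited from the finite sum structure and $f_\ell \in L^2(0,T;H)$. Swapping sums gives
\[
  \E_\xi[A_\xi(t) v] = \sum_{B} \P(\xi = B) \sum_{\ell \in B} \frac{A_\ell(t) v}{\tau_\ell} = \sum_{\ell = 1}^s \frac{A_\ell(t) v}{\tau_\ell} \sum_{B \ni \ell} \P(\xi = B) = \sum_{\ell = 1}^s A_\ell(t) v = A(t) v,
\]
with an identical manipulation handling $f_\xi$. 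I expect the only non-routine point to be selecting the semi-norm on $V_B$ so that (iii) and (v) give clean constants; once $|v|_{V_B}^p = \sum_{\ell \in B} |v|_{V_\ell}^p$ is fixed, everything else reduces to bookkeeping over the finite index set.
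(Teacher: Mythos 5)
Your proposal is correct and follows essentially the same route as the paper's proof: measurability is reduced to preimages $\xi^{-1}(B)$ of the finitely many batches (your union over $\{B : v \in V_B\}$ is exactly the paper's $\xi^{-1}(2^{B_v})$), the functional-analytic structure of $V_B=\bigcap_{\ell\in B}V_\ell$ is inherited from Lemma~\ref{lem:example}, and the identity $\E_{\xi}[A_{\xi}(t)v]=A(t)v$ is obtained by the same interchange of sums using the scaling $\tau_\ell$. The only cosmetic difference is that you verify Assumption~\ref{ass:A} for the scaled batch operator $A_B$ explicitly (and use a simple-function argument for measurability of $A_{\xi}(t)v$), whereas the paper dispatches these points by citing Lemma~\ref{lem:example} and a composition argument.
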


\begin{proof}
  In the following proof, we drop the index $n$ to keep the notation simpler.
  The embedding and norm properties are fulfilled as verified in the previous lemma. It 
  remains to verify the measurability condition.
  We need to verify that for every $v \in H$, the set $\{\omega \in \Omega: v \in 
  V_{\xi(\omega)}\} \in \F_{\xi} = \sigma \big( \sigma(\xi) \cup \sigma(\mathcal{N} \in \F : 
  \P(\mathcal{N}) = 0)\big)$. 
  For fixed $v \in H$, we set $B_v = \{\ell \in \{1,\dots, s\}: v \in V_{\ell}\} \in 
  2^{\{1,\dots,s\}}$. Then it follows that
  \begin{align*}
    \{\omega \in \Omega: v \in V_{\xi(\omega)}\}
    = \big\{ \omega \in \Omega: \xi(\omega) \in 2^{B_v} \big\} 
    = \xi^{-1}\big(2^{B_v}\big) \in \F_{\xi}.
  \end{align*}
  Moreover, we need to verify that the mapping $\omega \mapsto A_{\xi(\omega)}(t)v$ 
  is measurable for every $v \in H$. This can be seen from the decomposition
  $A_{\xi}(t)v = S_{A(t)v} \circ \xi$ where $S_{A(t)v} \colon 2^{\{1,\dots,s\}} \to V^*$
  is given through $S_{A(t)v} (B) = \sum_{\ell \in B} A_{\ell}(t)v$. As $\xi^{-1}(B) \in 
  \F_{\xi}$
  for all $B \subset 2^{\{1,\dots,s\}}$ and $S_{A(t)v}^{-1}(X) \subset  2^{\{1,\dots,s\}}$
  for any open set $X \subset V^*$, the mapping $\omega \mapsto A_{\xi(\omega)}(t)v$ 
  is measurable. Analogously, it can be proved that mapping $\omega \mapsto 
  f_{\xi(\omega)}(t)$ is measurable.
  In Lemma~\ref{lem:example}, we already verified that an operator $A_{\xi(w)}$ fulfills 
  the conditions from Assumption~\ref{ass:A}. Thus, it only remains to prove the  
  expectation property from Assumption~\ref{ass:expectation}. This is fulfilled as
  \begin{align*}
    \E_{\xi} [A_{\xi}(t) v]  
    &= \sum_{B \in 2^{\{1,\dots,s\}}} \P(\Omega_{\xi = B})
      \sum_{\ell \in B} \frac{1}{\tau_{\ell}} A_{\ell}(t) v \\
    &= \sum_{\ell = 1}^{s} \frac{1}{\tau_{\ell}}  A_{\ell}(t) v \sum_{B \in 2^{\{1,\dots,s\}} 
      : \ \ell  \in B }{ \P(\Omega_{\xi = B}) }
    = \sum_{\ell = 1}^{s}   A_{\ell}(t) v = A(t) v \quad \text{in } V^*
  \end{align*}
  holds true for $v \in V$ and for almost every $t \in [0,T]$. The same algebraic manipulation in $H$ instead of $V^*$ shows that $\E_{\xi} [f_{\xi}(t)] = f(t)$.
\end{proof}

\section{Solution is well-defined}
\label{sec:wellDef}

In the coming section, we show that our scheme \eqref{eq:scheme} is well-defined. This 
includes that first of all the scheme possesses a unique solution. We consider a purely 
deterministic equation~\eqref{eq:equation}. However, as the numerical scheme is 
randomized, the solution $U^n$ of \eqref{eq:scheme} is a mapping of the type $U^n 
\colon \Omega \to H$. 
Thus, we also need to make sure that it is a measurable function. These facts are verified 
in Lemma~\ref{lem:ExistenceMeasurability}. Moreover, we provide an integrability result 
in the form of an a priori bound in Lemma~\ref{lem:apriori}.

\begin{lemma} \label{lem:ExistenceMeasurability}
  Let Assumptions~\ref{ass:spaces}--\ref{ass:expectation} be fulfilled. 
  Further, let the random variables $\fn \colon \Omega \to H$ be given such that they are 
  $\F_{\xi_n}$-measurable for every $n \in \{1,\dots, N\}$. 
  Then for $\kappa h_n \leq \kappa h < 1$ there exists a unique $\F_n$-measurable 
  function $U^n \colon \Omega \to H$ such that $U^n(\omega) \in V_{\xi_n(\omega)}$ 
  and \eqref{eq:scheme} is fulfilled for every $n \in \{1,\dots,N\}$.
\end{lemma}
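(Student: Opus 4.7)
The argument would proceed by induction on $n$. The base case $U^0 = u_0$ is deterministic, hence trivially $\F_0$-measurable and in $H$. For the induction step, assume $U^{n-1}\colon \Omega \to H$ is $\F_{n-1}$-measurable; the task is to construct an $\F_n$-measurable $U^n\colon \Omega\to H$ with $U^n(\omega) \in V_{\xi_n(\omega)}$ almost surely satisfying
\[
v + h_n A_{\xi_n(\omega)}(t_n) v = U^{n-1}(\omega) + h_n f^n(\omega) \quad \text{in } V_{\xi_n(\omega)}^*.
\]
I would split the argument into a pathwise existence/uniqueness step and a separate measurability step.

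\textbf{Pathwise existence and uniqueness.} Fix $\omega$ and define $T_\omega v := v + h_n A_{\xi_n(\omega)}(t_n) v$ as a map $V_{\xi_n(\omega)} \to V_{\xi_n(\omega)}^*$. Using Assumption~\ref{ass:A}(iii) with the almost sure bound $\kappa_{\xi_n(\omega)} \leq \kappa$ and the step-size restriction $\kappa h_n < 1$, I would derive
\[
\dualVn{T_\omega v - T_\omega w}{v - w} \geq (1 - h_n \kappa)\|v-w\|_H^2 + h_n \eta_{\xi_n(\omega)} |v-w|_{V_{\xi_n(\omega)}}^p,
\]
so $T_\omega$ is strictly monotone. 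Boundedness is inherited from Assumption~\ref{ass:A}(iv), radial continuity transfers verbatim from $A_{\xi_n(\omega)}(t_n)$, and the semi-coercivity of Assumption~\ref{ass:A}(v) together with the $H$-term from the identity part of $T_\omega$ yields coercivity. The Browder--Minty surjectivity theorem on the reflexive Banach space $V_{\xi_n(\omega)}$ then produces a unique solution $U^n(\omega) \in V_{\xi_n(\omega)}$.

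\textbf{Measurability.} This is the main obstacle, because the target space $V_{\xi_n(\omega)}$ varies with $\omega$. My plan is a Galerkin approximation based on a countable system contained in the fixed deterministic space $V$. By Assumption~\ref{ass:expectation} we have $V \stackrel{d}{\incl} V_{\xi(\omega)}$, so any countable set $\{e_k\}_{k\in\N} \subset V$ dense in $V$ remains dense in every fiber $V_{\xi_n(\omega)}$. For each $m$, the Galerkin problem
\[
\inner[H]{U^{n,m}}{e_j} + h_n \dualVn{A_{\xi_n}(t_n) U^{n,m}}{e_j} = \inner[H]{U^{n-1} + h_n f^n}{e_j}, \quad j = 1,\dots,m,
\]
reduces to a nonlinear algebraic system for the coefficient vector $\vec c(\omega) \in \R^m$ of $U^{n,m} = \sum_j c_j e_j$. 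The associated map $(\omega,\vec c) \mapsto F_m(\omega,\vec c)$ is Carath\'eodory: measurable in $\omega$ via the $\F_{\xi_n}$-measurability of $A_{\xi_n}(t_n)v$ and continuous in $\vec c$ by radial continuity in finite dimension. The monotonicity estimate above (restricted to $V_m$) makes it a homeomorphism of $\R^m$. Standard measurable-selection arguments then give $\F_n$-measurability of $\omega \mapsto \vec c(\omega)$ and hence of $U^{n,m}$. Uniform a priori bounds from coercivity together with strict monotonicity yield pathwise convergence $U^{n,m}(\omega) \to U^n(\omega)$ in $H$, and measurability passes through pointwise limits to $U^n$, closing the induction.

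The technical crux is constructing a deterministic Galerkin skeleton simultaneously dense in every fiber $V_{\xi_n(\omega)}$; the dense embedding $V \stackrel{d}{\incl} V_{\xi(\omega)}$ in Assumption~\ref{ass:expectation} is precisely what makes this possible. Once that is exploited, the passage to the limit in the Galerkin equation proceeds by monotone-operator arguments analogous in spirit to those in \cite{EisenmannHansen.2022}.
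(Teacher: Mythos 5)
Your pathwise existence and uniqueness step coincides with the paper's: both show that $I + h_n A_{\xi_n(\omega)}(t_n)\colon V_{\xi_n(\omega)}\to V_{\xi_n(\omega)}^*$ is radially continuous, monotone and coercive, invoke surjectivity via \cite[Theorem~2.18]{Roubicek.2013}, and obtain injectivity from the strong monotonicity in $H$ under $\kappa h_n<1$. For measurability you take a genuinely different route. The paper does not return to a Galerkin construction: it defines the residual map $g(\omega,U)=h_n f^n(\omega)+U^{n-1}-(I+h_n A_{\xi_n(\omega)}(t_n))U$ on $\Omega\times H$, extended by a fixed nonzero dummy element $e\in V^*$ when $U\notin V_{\xi_n(\omega)}$, verifies measurability in $\omega$ and continuity in $U$ of the associated duality pairings, and then applies the abstract root-measurability result of Lemma~\ref{lem:measurability}; that lemma's proof uses a countable set $Q\subset V$ dense in every fiber, i.e.\ the same structural fact $V\stackrel{d}{\incl}V_{\xi(\omega)}$ that underpins your deterministic Galerkin skeleton. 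Your plan instead solves finite-dimensional problems in $\mathrm{span}\{e_1,\dots,e_m\}\subset V$, gets measurability of the coefficient maps from Carath\'eodory/measurable-selection arguments, and pushes it through the pathwise limit. Both work. The paper's route is shorter: measurability is outsourced to a reusable lemma, so no uniform-in-$m$ bounds, no boundedness of $A_{\xi_n}(t_n)$ in $V_{\xi_n}^*$, and no Minty-type identification of the limit are needed. Your route is more self-contained (it essentially re-proves the surjectivity theorem while tracking measurability) but needs the extra bookkeeping you indicate, and one point should be made explicit: to preserve measurability you must obtain convergence of the whole sequence $U^{n,m}(\omega)$, not of an $\omega$-dependent subsequence; your argument does deliver this, since the strong $H$-monotonicity together with Galerkin orthogonality and uniqueness of $U^n(\omega)$ forces the full sequence to converge in $H$, after which measurability of the pointwise limit is standard.
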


\begin{proof}
  For $\omega \in \Omega$, we find that the operator $I + h_n  \AnOm \colon 
  \VnOm \to \VnOm^*$ is monotone, radially continuous and coercive. Thus, it is 
  surjective, compare \cite[Theorem~2.18]{Roubicek.2013}. 
  Moreover, for $U_1, U_2 \in \VnOm$ with $\big(I + h_n  \AnOm\big)U_1 = \big(I + 
  h_n \AnOm\big)U_2$, it follows that
  \begin{align*}
    0&= \langle \big(I + h_n  \AnOm\big)U_1 - \big(I + h_n  
      \AnOm\big)U_2, U_1 - U_2 \rangle_{\VnOm^* \times \VnOm}\\
    &\geq \big(1 - h_n \kappa \big) \|U_1 - U_2 \|_H^2.
  \end{align*}
  Thus, it follows that $\|U_1 - U_2 \|_H = 0$ and $I + h_n  \AnOm$ is injective for 
  $\kappa h_n < 1$ and, in particular, bijective.
  
  It remains to verify that $U^n \colon \Omega \to H$ is well-defined. We define the 
  auxiliary function $g \colon \Omega \times H \to V^*$ such that
  \begin{align*}
    (\omega, U) \mapsto 
    \begin{cases}
      h_n \fn(\omega) + U^{n-1} - \big(I + h_n  \AnOm \big) U, &U \in 
      \VnOm\\
      e, &U \in H \setminus \VnOm,
    \end{cases}
  \end{align*}
  where $e \in V^*$ with $\|e\|_{V^*} = 1$. In the following, we want to apply 
  Lemma~\ref{lem:measurability} to the function $g$ to prove that $U^n$ is 
  measurable. Applying \cite[Lemma~2.16]{Roubicek.2013}, it follow that for fixed 
  $\omega \in \Omega$, the function $v \mapsto \dualV{g(\omega, v)}{w}$ is 
  continuous for all $v, w \in \VnOm$. 
  It remains to verify that for fixed $v \in H$ and $w \in V$, 
  the function $\omega \mapsto \dualV{g(\omega, v)}{w}$ is measurable.
  Let $B$ be an open set in $V^*$. It then follows that 
  \begin{align*}
    &\big(g(\cdot, v) \big)^{-1} (B) \\
    &= \{ \omega \in \Omega : g(\omega, v) \in B \}\\
    &= \{ \omega \in \Omega : v \in \VnOm, h_n \fn(\omega) + U^{n-1} - \big(I + 
    h_n  \AnOm \big) v \in B \}\\
    &\quad \cup \{ \omega \in \Omega : v \in H \setminus \VnOm, e \in B \} \\
    &= \big(\{ \omega \in \Omega : v \in \VnOm \} 
    \cap \{ \omega \in \Omega : h_n \fn(\omega) + U^{n-1} - \big(I + h_n  
    \AnOm 
    \big) v \in B \} \big)\\
    &\quad \cup \big(\{ \omega \in \Omega : v \in H \setminus \VnOm \} \cap \{ 
    \omega \in \Omega : e \in B \}\big)\\
    &=: (T_1 \cap T_2) \cup T_3.
  \end{align*}
  As the function $\omega \mapsto h_n \fn(\omega) + U^{n-1} - \big(I + h_n  
  \AnOm \big)v$ is measurable, it follows that $T_2 \subset \Omega$ is 
  measurable. The sets $T_1$ and $T_3$ are measurable by assumption. Thus, it 
  follows that $\omega \mapsto g(\omega, v)$ and therefore $\omega \mapsto 
  \dualV{g(\omega, v)}{w}$ is measurable.
  
  As argued above for every $\omega \in \Omega$, there exists a unique element 
  $U^n(\omega)$ such that $g(\omega, U^n(\omega)) = 0$.
  Thus, we can now apply Lemma~\ref{lem:measurability} to prove that $U^n \colon 
  \Omega \to H$ is $\F_n$-measurable.
\end{proof}

\begin{lemma}
  \label{lem:apriori}
  Let Assumptions~\ref{ass:spaces}--\ref{ass:expectation} be fulfilled. 
  Further, let the random variables $\fn \colon \Omega \to H$ be given such that they are 
  $\F_{\xi_n}$-measurable and $\E_{\xi_n} \big[ \|\fn\|_{H}^2 \big] < \infty$ for 
  every $n \in \{1,\dots, N\}$. 
  Then for $2\kappa h_n \leq 2\kappa h < 1$ the solution $\{U^n\}_{n \in \{1,\dots,N\}}$ 
  of \eqref{eq:scheme} fulfills the a priori bound
  \begin{align*}
    &\E_n \big[\|U^n\|_H^2\big] + \sum_{i=1}^{n}\E_i \big[ \|U^i - U^{i-1}\|_H^2 \big]
    + 2 \sum_{i=1}^{n} h_i \E_i \big[ \mui |U^i |_{\Vi}^2\big]\\
    &\leq C \Big( 2\|u_0\|^2  + 4 T \lambda+ 5 C T \sum_{i=1}^{N} h_i \E_{\xi_i} \big[ \| 
    \fii\|_{H}^2 \big]  \Big),
  \end{align*}
  where $C = \frac{1}{1- 2 h \kappa} \exp\big(\frac{2\kappa T}{1- 2 h \kappa}\big)$ for 
  all $n \in \{1,\dots,N\}$.
\end{lemma}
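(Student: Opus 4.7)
The standard energy-type argument applies. I would test the scheme~\eqref{eq:scheme} with $U^n$ itself in the duality bracket $\langle\cdot,\cdot\rangle_{V_{\xi_n}^*\times V_{\xi_n}}$. Since $U^n - U^{n-1} \in H \incl V_{\xi_n}^*$ and $U^n \in V_{\xi_n}$, the first term collapses to the Hilbert inner product $(U^n-U^{n-1},U^n)_H$, and the polarization identity $2(a-b,a)_H = \|a\|_H^2-\|b\|_H^2+\|a-b\|_H^2$ produces the telescoping backbone. Doubling the tested equation yields the pathwise identity
\begin{equation*}
  \|U^n\|_H^2-\|U^{n-1}\|_H^2+\|U^n-U^{n-1}\|_H^2+2h_n\dualVn{\An U^n}{U^n} = 2h_n(\fn,U^n)_H.
\end{equation*}

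Next, I would apply the semi-coercivity condition Assumption~\ref{ass:A}(v) pathwise to $A_{\xi_n}$ to bound the nonlinear pairing from below by $2h_n\mun|U^n|_{\Vn}^p - 2h_n\kapn\|U^n\|_H^2 - 2h_n\lamn$, and Young's inequality to control the right-hand side as $2h_n(\fn,U^n)_H \le \delta h_n\|U^n\|_H^2 + \tfrac{h_n}{\delta}\|\fn\|_H^2$ for a suitably chosen $\delta>0$. Using $\kapn \le \kappa$ almost surely and rearranging, I arrive at the pathwise recursion
\begin{equation*}
  (1-2\kappa h_n - \delta h_n)\|U^n\|_H^2 + \|U^n-U^{n-1}\|_H^2 + 2h_n\mun|U^n|_{\Vn}^p \le \|U^{n-1}\|_H^2 + 2h_n\lamn + \tfrac{h_n}{\delta}\|\fn\|_H^2,
\end{equation*}
where the condition $2\kappa h < 1$ ensures the left-hand coefficient is uniformly bounded away from zero after $\delta$ is fixed small enough.

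The third step is to take expectations. Applying $\E_{\xi_n}$ and using that $U^{n-1}$ is $\F_{n-1}$-measurable and therefore independent of $\xi_n$ replaces $\lamn$ by $\E_{\xi_n}[\lamn] \le \lambda$ while leaving the remaining terms in a form compatible with the outer expectations $\E_{\xi_1},\dots,\E_{\xi_{n-1}}$. After iterating, I obtain a discrete Gronwall-type recursion $a_n \le (1+ch_n)a_{n-1} + s_n$ for $a_n=\E_n[\|U^n\|_H^2]$, and the standard discrete Gronwall lemma produces the announced factor $C = (1-2h\kappa)^{-1}\exp(2\kappa T/(1-2h\kappa))$. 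The non-telescoping left-hand contributions $\E_i[\|U^i-U^{i-1}\|_H^2]$ and $2h_i\E_i[\mui|U^i|_{\Vi}^p]$ are transferred to the right by summing them over $i\le n$ before closing the Gronwall step.

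The main obstacle is purely bookkeeping: choosing $\delta$ in Young's inequality so that the source coefficient collapses to the stated $5CT\sum_i h_i\E_{\xi_i}[\|\fii\|_H^2]$ (which will force $\delta$ to scale with $1/T$) and tracking the constants through the Gronwall iteration so that they match $2\|u_0\|_H^2 + 4T\lambda$ exactly. Well-definedness and $\F_n$-measurability of $U^n$ are already provided by Lemma~\ref{lem:ExistenceMeasurability}, so no fixed-point or regularity argument intervenes; the only conceptual subtlety is that the independence of $\xi_n$ from $\F_{n-1}$ must be invoked carefully to justify swapping $U^{n-1}$ with its conditional expectation at each layer.
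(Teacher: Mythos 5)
Your overall energy framework (test with $U^i$, polarization identity, coercivity from Assumption~\ref{ass:A}(v), take expectations, discrete Gr\"onwall) matches the paper's, but your treatment of the source term is genuinely different, and it is exactly where the plan breaks down. You absorb $2h_n(\fn,U^n)_H$ by Young's inequality with a parameter $\delta$, which places $-\delta h_n\|U^n\|_H^2$ on the left and leaves the coefficient $1-2\kappa h_n-\delta h_n$. The lemma assumes only $2\kappa h<1$, so for any fixed $\delta>0$ (including your suggested $\delta\sim 1/T$) there are admissible parameter choices with $2\kappa h$ arbitrarily close to $1$ for which this coefficient is negative and the recursion cannot be closed; the claim that ``$2\kappa h<1$ ensures the left-hand coefficient is bounded away from zero after $\delta$ is fixed small enough'' is false. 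The only repair inside your scheme is to let $\delta$ shrink like $1-2\kappa h$ (or to strengthen the step-size restriction to $(2\kappa+\delta)h<1$), but then the factor $1/\delta$ multiplying $\sum_i h_i\E_{\xi_i}\big[\|\fii\|_H^2\big]$ and the Gr\"onwall rate $2\kappa+\delta$ no longer reproduce the stated constant $C=\frac{1}{1-2h\kappa}\exp\big(\frac{2\kappa T}{1-2h\kappa}\big)$, the $5CT$ coefficient, or the step restriction as given. So this is more than bookkeeping: the bound in the lemma, as stated, does not follow from this route.

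The paper's proof avoids the absorption altogether. It keeps only Cauchy--Schwarz, $(\fii,U^i)_H\le\|\fii\|_H\|U^i\|_H$, sums, and bounds $2\sum_i h_i\E_i\big[\|\fii\|_H\|U^i\|_H\big]$ by $2\big(\E_{i_{\max}}\big[\|U^{i_{\max}}\|_H^2\big]\big)^{1/2}B$ with $B=\big(T\sum_i h_i\E_{\xi_i}\big[\|\fii\|_H^2\big]\big)^{1/2}$, where $i_{\max}$ maximizes $i\mapsto\E_i\big[\|U^i\|_H^2\big]$. Gr\"onwall is then applied with $b_i=2\kappa h_i$ only, so $2\kappa h<1$ suffices; the resulting estimate is evaluated at $n=i_{\max}$ and closed with the elementary implication ``$x^2\le 2ax+b^2$ implies $x\le 2a+b$'', followed by one application of Young's inequality. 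This bootstrap at the maximal index is the missing idea in your plan and is what produces the quadratic-in-$C$ dependence hidden in the $5CT$ term; your handling of the extra left-hand terms $\E_i\big[\|U^i-U^{i-1}\|_H^2\big]$ and $2h_i\E_i\big[\mui|U^i|_{\Vi}^p\big]$, and of the expectation layering, is otherwise fine.
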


\begin{proof}
  We start by testing \eqref{eq:scheme} with the solution $U^i$ to find that
  \begin{align} \label{eq:proof_apriori_1}
    \inner{U^i - U^{i-1}}{U^i} + h_i \dualVi{\Ai U^i}{U^i}
    = h_i \inner{\fii}{U^i}.
  \end{align}
  For the first term of this equality, we use the identity $\inner{a - b}{a} = \frac{1}{2} 
  (\|a\|^2 - \|b\|^2 + \|a-b\|^2 )$ for $a, b \in H$ to find that
  \begin{align*}
    \inner{U^i - U^{i-1}}{U^i}
    = \frac{1}{2} \big( \|U^i\|_H^2 - \|U^{i-1}\|_H^2 + \|U^i - U^{i-1}\|_H^2 \big).
  \end{align*}
  Due to the coercivity condition from Assumption~\ref{ass:A} (v), we obtain
  \begin{align*}
    \dualVi{\Ai U^i}{U^i} + \kapi \|U^i \|_H^2 + \lami \geq \mui |U^i |_{\Vi}^p.
  \end{align*}
  For the right-hand side of \eqref{eq:proof_apriori_1}, we observe $\inner{\fii}{U^i} \leq 
  \| \fii\|_H \|U^i \|_H$.
  Combining the previous statements, we find
  \begin{align*}
    0&=\dualVi{U^i - U^{i-1} + h_i \Ai U^i - h_i \fii}{U^i}\\
    &\geq \frac{1}{2} \big( \|U^i\|_H^2 - \|U^{i-1}\|_H^2 + \|U^i - 
    U^{i-1}\|_H^2 \big)\\
    &\quad - h_i \kapi \|U^i \|_H^2 - h_i \lami + h_i \mui |U^i |_{\Vi}^p
    - h_i \| \fii\|_{H} \|U^i \|_H.
  \end{align*}
  After rearranging the terms and multiplying both sides of the inequality with the factor  
  $2$, we obtain the following bound 
  \begin{align*}
    &\|U^i\|_H^2 - \|U^{i-1}\|_H^2 + \|U^i - U^{i-1}\|_H^2 + 2 h_i \mui |U^i |_{\Vi}^p\\
    &\qquad\leq 2 h_i \kapi \|U^i \|_H^2 + 2 h_i \lami 
    + 2 h_i \| \fii\|_{H} \|U^i \|_H.
  \end{align*}
  Taking the expectation and using Assumption~\ref{ass:expectation} shows that
  \begin{align*}
    &\E_i \big[\|U^i\|_H^2\big] - \E_{i-1} \big[\|U^{i-1}\|_H^2\big] + \E_i \big[ \|U^i - 
    U^{i-1}\|_H^2\big] + 2 h_i \E_i \big[ \mui |U^i |_{\Vi}^p \big]\\
    &\qquad\leq 2 h_i \E_i \big[ \kapi \|U^i \|_H^2\big] + 2 h_i \E_{\xi_i} \big[\lami\big]
    +2 h_i \E_i \big[ \| \fii\|_{H} \|U^i \|_H\big] \\
    &\qquad\leq 2 h_i \kappa \E_i \big[ \|U^i \|_H^2\big] + 2 h_i \lambda
    + 2 h_i \E_i \big[ \| \fii\|_{H} \|U^i \|_H\big].
  \end{align*}
  This inequality is summed up from $i = 1$ to $n \in \{1,\dots,N\}$,
  \begin{align}\label{eq:proof_apriori_2}
    \begin{split}
      &\E_n \big[\|U^n\|_H^2\big] + 2 \sum_{i=1}^{n} \E_i \big[ \|U^i - U^{i-1}\|_H^2 \big]
      + 2 \sum_{i=1}^{n} h_i \E_i \big[ \mui |U^i |_{\Vi}^p\big]\\
      &\qquad\leq \|u_0\|_H^2 + 2 \kappa \sum_{i=1}^{n} h_i \E_i \big[ \|U^i \|_H^2\big] 
      + 2 T \lambda
      +2 \sum_{i=1}^{N} h_i \E_i \big[ \| \fii\|_{H} \|U^i \|_H\big],
    \end{split}
  \end{align}
  where we only made the right-hand side bigger by summing to the final value $N$.
  In the following, denote $i_{\max} \in \{1,\dots,N\}$ such that $\max_{i \in \{1,\dots,N\}} 
  \E_i \big[\|U^i \|_H^2 \big] = \E_{i_{\max}} \big[\|U^{i_{\max}} \|_H^2 \big]$. For the last term in~\eqref{eq:proof_apriori_2} we then have
  \begin{align*}
    2 \sum_{i=1}^{N} h_i \E_i \big[ \| \fii\|_{H} \|U^i \|_H\big]
    &\leq 2 \sum_{i=1}^{N} h_i \big(\E_i \big[ \| \fii\|_{H}^2 \big] 
    \big)^{\frac{1}{2}}
    \big(\E_i \big[\|U^i \|_H^2 \big]\big)^{\frac{1}{2}}\\
    &\leq 2 \big(\E_{i_{\max}} \big[\|U^{i_{\max}} \|_H^2 \big]\big)^{\frac{1}{2}} 
    \sum_{i=1}^{N} h_i \big(\E_{\xi_i} \big[ \| \fii\|_{H}^2 \big] \big)^{\frac{1}{2}}\\
    &\leq 2 \big(\E_{i_{\max}} \big[\|U^{i_{\max}} \|_H^2 \big]\big)^{\frac{1}{2}} 
    B, 
  \end{align*}
  where $B = \big( T \sum_{i=1}^{N} h_i \E_{\xi_i} \big[ \| \fii\|_{H}^2 
  \big] \big)^{\frac{1}{2}}$. We further abbreviate
   $x_n = \E_n \big[\|U^n\|_H^2\big] + \sum_{i=1}^{n} \E_i \big[ \|U^i - U^{i-1}\|_H^2 
   \big] + 2 \sum_{i=1}^{n} h_i \E_i \big[ \mui |U^i |_{\Vi}^p\big]$,
   and note that it follows directly from this definition that
  \begin{align*}
    2 \kappa \sum_{i=1}^{n} h_i \E_i \big[ \|U^i \|_H^2\big] \leq 2 \kappa \sum_{i=1}^{n} 
    h_i x_i.
  \end{align*}
  In conclusion, \eqref{eq:proof_apriori_2} therefore implies that
  \begin{align*}
    x_n &\leq \|u_0\|_H^2 + 2 \kappa \sum_{i=1}^{n} h_i x_i
    + 2 T \lambda 
    + 2 \big(\E_{i_{\max}} \big[\|U^{i_{\max}} \|_H^2 \big]\big)^{\frac{1}{2}} B.
  \end{align*}
  Applying the discrete Gr\"onwall inequality in Lemma~\ref{lem:gronwall}
  yields
  \begin{equation}\label{eq:proof_apriori_3}
      x_n \leq C \big(\|u_0\|_H^2  + 2 T \lambda 
      + 2 \big(\E_{i_{\max}} \big[\|U^{i_{\max}} \|_H^2 \big]\big)^{\frac{1}{2}} 
      B\big),
  \end{equation}
  for $C = \frac{1}{1- 2 h \kappa} \exp\big(\frac{2\kappa T}{1- 2 h \kappa}\big)$.  
  As this inequality holds for every $n \in \{1,\dots,N\}$, it is also fulfilled for $i_{\max}$. 
  Thus, it follows that
  \begin{align*}
    \E_{i_{\max}} \big[\|U^{i_{\max}}\|_H^2\big]
    \leq C \big(\|u_0\|^2 + 2 T \lambda + 2 \big(\E_{i_{\max}} \big[\|U^{i_{\max}} 
    \|_H^2 \big]\big)^{\frac{1}{2}} B \big).
  \end{align*}  
  We can now use that $x^2 \leq 2ax + b^2$ implies that $x \leq 2a +b$ for $a,b,x \in 
  [0,\infty)$ and find
  \begin{align*}
    \big(\E_{i_{\max}} \big[\|U^{i_{\max}}\|_H^2\big]\big)^{\frac{1}{2}}
    \leq C^{\frac{1}{2}}
    \big(\|u_0\|^2  + 2 T \lambda\big)^{\frac{1}{2}} + 2 C B.
  \end{align*}
  Inserting this bound in \eqref{eq:proof_apriori_3} and applying Young's inequality 
  (Lemma~\ref{lem:youngsInequality} with $\varepsilon = \frac{1}{2}$), we then obtain
  \begin{align*}
    &\E_n \big[\|U^n\|_H^2\big] + \sum_{i=1}^{n}\E_i \big[ \|U^i - U^{i-1}\|_H^2 \big]
    + 2 \sum_{i=1}^{n} h_i \E_i \big[ \mui |U^i |_{\Vi}^2\big]\\
    &\qquad\leq C \big(\|u_0\|^2  + 2 T \lambda
    + 2 C^{\frac{1}{2}} B
    \big(\|u_0\|^2  + 2 T \lambda\big)^{\frac{1}{2}}
    + 4 C B^2 \big)\\
    &\qquad\leq C \big(\|u_0\|^2  + 2 T \lambda
    + \big(\|u_0\|^2  + 2 T \lambda\big) + C B^2 + 4 C B^2 
    \big)\\
    &\qquad\leq C \big( 2\|u_0\|^2  + 4 T \lambda+ 5 C B^2 \big),
  \end{align*}  
  which finishes the proof.
\end{proof}

\section{Stability and convergence in expectation}
\label{sec:convH}

With the previous sections in mind, we can now turn our attention to the main results of 
this paper. We provide error bounds for the scheme \eqref{eq:scheme} measured in 
expectation. First, we give a stability result in Theorem~\ref{thm:stabilityH}. The 
stability bound can be proved in a similar manner to the a priori bound in 
Lemma~\ref{lem:apriori}. The aim 
of this bound is to show how two solutions of the same scheme with respect to different 
right-hand sides and initial values differ. 
This stability result can then be used to prove the desired error bounds in
Theorem~\ref{thm:convH} by using well-chosen data that agrees with the exact
solution at the grid points.
Note that in contrast to other works (e.g.~\cite{Emmrich.2009, 
EmmrichThalhammer.2010}), we measure $f(t) - A(t)u(t)$ in the 
$H$-norm. This can be interpreted as a stricter regularity assumption. The advantage 
is that certain error terms disappear in expectation, compare the second bound in 
Lemma~\ref{lem:auxillary_bounds_sumH}.

\begin{theorem}
  \label{thm:stabilityH}
  Let Assumptions~\ref{ass:spaces}--\ref{ass:expectation} be fulfilled.
  Further, let the random variable $\fn \colon \Omega \to H$ be given such that it is 
  $\F_{\xi_n}$-measurable and $\E_{\xi_n} \big[ \|\fn\|_H^2 \big] < \infty$ for 
  every $n \in \{1,\dots, N\}$. Let $\{U^n\}_{n \in \{1,\dots,N\}}$ be the solution of 
  \eqref{eq:scheme} and let $\{V^n\}_{n \in \{1,\dots,N\}}$ be the solution of 
  \begin{align} \label{eq:stabH}
    \begin{cases}
      V^n - V^{n-1} + h_n \An V^n = h_n g^n \quad &\text{in } V_{\xi_n}^*,
      \quad n \in \{1,\dots,N\}, \\
      V^0 = v_0 \quad &\text{in } H,
    \end{cases}    
  \end{align}
  for $v_0 \in H$ and $g^n \colon \Omega \to H$ such that it is 
  $\F_{\xi_n}$-measurable and $\E_{\xi_n} \big[ \|g^n\|_H^2 \big] < \infty$ for every $n 
  \in \{1,\dots, N\}$. 
  Then for $2\kappa h_n \leq 2\kappa h < 1$, it follows that 
  \begin{align*}
    &\E_n \big[ \|U^n - V^n\|_H^2\big] + 
    \frac{1}{2} \sum_{i=1}^{n} \E_i \big[\|U^i - V^i - (U^{i-1} - V^{i-1})\|_H^2 \big]\\
    &\quad+ 2 \sum_{i=1}^{n} h_i \E_i \big[ \etai |U^i - V^i |_{\Vi}^p\big] \\
    &\leq 2 C \|u_0 -v_0\|^2 + 4 C \sum_{i=1}^{N} h_i^2 \E_i \big[ \| \fii - g^i\|_H^2\big]
    + 5 C^2 T \sum_{i=1}^{N} h_i \big\| \E_{\xi_i} \big[ \fii - g^i \big] \big\|_H^2
  \end{align*}
  for $C = \frac{1}{1 - 2 h \kappa} \exp\big(\frac{2\kappa T}{1 - 2 \kappa T}\big)$ and  
  $n \in \{1,\dots,N\}$.
\end{theorem}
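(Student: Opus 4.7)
The plan is to emulate the argument of Lemma~\ref{lem:apriori}, applied to the difference $W^n := U^n - V^n$. First I would subtract~\eqref{eq:stabH} from~\eqref{eq:scheme} to obtain
\begin{equation*}
W^n - W^{n-1} + h_n\bigl(A_{\xi_n}(t_n) U^n - A_{\xi_n}(t_n) V^n\bigr) = h_n(f^n - g^n)
\end{equation*}
in $V_{\xi_n}^*$, and then test against $W^n \in V_{\xi_n}$. Using the polarization identity for $\inner{W^n - W^{n-1}}{W^n}$ and the monotonicity condition from Assumption~\ref{ass:A}(iii) for the operator term yields the pointwise (in $\omega$) inequality
\begin{equation*}
\tfrac{1}{2}\bigl(\|W^n\|_H^2 - \|W^{n-1}\|_H^2 + \|W^n - W^{n-1}\|_H^2\bigr) + h_n \eta_{\xi_n}|W^n|_{V_{\xi_n}}^p \leq h_n \kappa_{\xi_n}\|W^n\|_H^2 + h_n \inner{f^n - g^n}{W^n}.
\end{equation*}

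The crucial step --- and the main obstacle --- is the treatment of the forcing term $h_n \inner{f^n - g^n}{W^n}$, which must produce both the $h_i^2 \|f^i - g^i\|_H^2$ and the $\|\E_{\xi_i}[f^i - g^i]\|_H^2$ contributions in the final estimate. I would split $W^n = W^{n-1} + (W^n - W^{n-1})$ and treat the two resulting inner products differently. For $h_n \inner{f^n - g^n}{W^n - W^{n-1}}$, Young's inequality absorbs $\tfrac{1}{2}\|W^n - W^{n-1}\|_H^2$ into the corresponding term on the left and leaves $\tfrac{h_n^2}{2}\|f^n - g^n\|_H^2$ behind. For $h_n \inner{f^n - g^n}{W^{n-1}}$, I exploit the independence of $\xi_n$ from $\F_{n-1}$ provided by Assumption~\ref{ass:xis}: since $W^{n-1}$ is $\F_{n-1}$-measurable by Lemma~\ref{lem:ExistenceMeasurability}, a conditioning argument gives $\E_{\xi_n}\bigl[\inner{f^n - g^n}{W^{n-1}}\bigr] = \inner{\E_{\xi_n}[f^n - g^n]}{W^{n-1}}$, which is exactly where the \emph{conditional mean} of the data replaces the pointwise data.

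Summing from $i=1$ to $n$ and taking the total expectation, the remaining terms to control are $2\kappa \sum_i h_i \E_i[\|W^i\|_H^2]$ on the one hand and $2 \sum_i h_i \|\E_{\xi_i}[f^i - g^i]\|_H \bigl(\E_{i-1}[\|W^{i-1}\|_H^2]\bigr)^{1/2}$ on the other. I would handle the latter as in Lemma~\ref{lem:apriori}: a Cauchy--Schwarz step in $i$ extracts the factor $B := \bigl(T \sum_i h_i \|\E_{\xi_i}[f^i - g^i]\|_H^2\bigr)^{1/2}$ multiplied by $\max_i (\E_i[\|W^i\|_H^2])^{1/2}$. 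The discrete Grönwall lemma from the appendix eliminates the $\kappa$ term and produces the factor $C$, after which the quadratic-inequality trick (if $x^2 \leq 2ax + b^2$ then $x \leq 2a + b$) closes the bound and yields the advertised constant $5C^2 T$ in front of the conditional-mean term. The conceptual subtlety lies entirely in the conditioning step above; without the independence structure of Assumption~\ref{ass:xis} one would only obtain an $\E_i[\|f^i - g^i\|_H^2]$ dependence and lose the self-averaging effect that makes the randomized scheme competitive with deterministic splittings.
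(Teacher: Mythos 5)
Your proposal follows the paper's proof essentially verbatim: the same splitting $W^i = W^{i-1} + (W^i - W^{i-1})$ inside the forcing term, the same conditioning step using that $W^{i-1}$ is $\F_{i-1}$-measurable and hence independent of $\xi_i$ so that only $\E_{\xi_i}[f^i - g^i]$ survives, and the same maximum-index/Gr\"onwall/quadratic-inequality chain with $B_2^2 \le T\sum_i h_i \|\E_{\xi_i}[f^i-g^i]\|_H^2$ at the end. The one adjustment needed is your Young weight: bounding $h_i\inner{f^i-g^i}{W^i-W^{i-1}} \le \tfrac{h_i^2}{2}\|f^i-g^i\|_H^2 + \tfrac12\|W^i-W^{i-1}\|_H^2$ cancels the entire difference term coming from the polarization identity, so to retain the $\tfrac12\sum_i \E_i\big[\|W^i-W^{i-1}\|_H^2\big]$ appearing on the left of the claimed estimate you should absorb only a quarter of it (as the paper does, via $ab \le a^2 + \tfrac14 b^2$), which also produces the stated constants.
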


\begin{proof}
  We start by subtracting \eqref{eq:stabH} from \eqref{eq:scheme} and testing with $U^i - V^i$ 
  to get
  \begin{align} \label{eq:proof_stabH_1}
    \begin{split}
      &\innerb{(U^i - V^i) - (U^{i-1} - V^{i-1})}{U^i - V^i}\\
      &\quad+ h_n \dualVi{\Ai U^i - \Ai V^i}{U^i - V^i}
      = h_n \inner{\fii - g^i}{U^i - V^i}.
    \end{split}
  \end{align}
  For the first term of this equality, we use the identity $\inner{a - b}{a} = \frac{1}{2} (\|a\|^2 
  - \|b\|^2 + \|a-b\|^2 )$ for $a, b \in H$ to find that
  \begin{align*}
    &\innerb{(U^i - V^i) - (U^{i-1} - V^{i-1})}{U^i - V^i}\\
    &= \frac{1}{2} \big( \|U^i - V^i\|_H^2 - \|U^{i-1} - V^{i-1}\|_H^2 + \|U^i - V^i - (U^{i-1} - 
    V^{i-1})\|_H^2 \big).
  \end{align*}
  Due to the monotonicity condition from Assumption~\ref{ass:A} (iii), we obtain
  \begin{align*}    
    \dualVi{\Ai U^i - \Ai V^i}{U^i - V^i} 
    + \kapi \|U^i - V^i \|_H^2
    \geq \etai |U^i - V^i |_{\Vi}^p.
  \end{align*}
  It remains to find a bound for the right-hand side of \eqref{eq:proof_stabH_1}. 
  Applying Cauchy-Schwarz's inequality and 
  the weighted Young inequality for products (Lemma~\ref{lem:youngsInequality} with 
  $\varepsilon = 1$), it follows that
  \begin{align*}
    &h_i \innerb{\fii - g^i}{U^i - V^i}\\
    &= h_i \innerb{\fii - g^i}{U^{i-1} - V^{i-1}}
    + h_i \innerb{\fii - g^i}{U^i - V^i - (U^{i-1} - V^{i-1})}\\
    &\leq h_i \innerb{\fii - g^i}{U^{i-1} - V^{i-1}}
    + h_i \| \fii - g^i\|_H  \| U^i - V^i - (U^{i-1} - V^{i-1})\|_H\\
    &\leq h_i \innerb{\fii - g^i}{U^{i-1} - V^{i-1}}
    + h_i^2 \| \fii - g^i\|_H^2
    + \frac{1}{4} \| U^i - V^i - (U^{i-1} - V^{i-1})\|_H^2.
  \end{align*}
  Combining the previous statements, we find
  \begin{align*}
    0&= \inner{U^i - V^i - (U^{i-1} - V^{i-1})}{U^i - V^i}\\
    &\quad + h_i \dualVi{\Ai U^i - \Ai V^i }{U^i - V^i}
    - h_i \innerb{\fii - g^i}{U^i - V^i}\\
    &\geq \frac{1}{2} \big( \|U^i - V^i\|_H^2 - \|U^{i-1} - V^{i-1}\|_H^2 + \|U^i - V^i - 
    (U^{i-1} - V^{i-1})\|_H^2 \big)\\
    &\quad - h_i \kapi \|U^i - V^i \|_H^2 + h_i \etai |U^i - V^i |_{\Vi}^p\\
    &\quad -h_i \innerb{\fii - g^i}{U^{i-1} - V^{i-1}}
    - h_i^2 \| \fii - g^i\|_H^2
    - \frac{1}{4} \| U^i - V^i - (U^{i-1} - V^{i-1})\|_H^2.
  \end{align*}
  After rearranging the terms and multiplying both sides of the inequality with the factor  
  $2$, we obtain the following bound 
  \begin{align*}
    &\|U^i - V^i\|_H^2 - \|U^{i-1} - V^{i-1}\|_H^2 + \frac{1}{2}\|U^i - V^i - (U^{i-1} - 
    V^{i-1})\|_H^2\\
    &\qquad  + 2 h_i \etai |U^i - V^i |_{\Vi}^p\\
    &\quad\leq 2 h_i \kapi \|U^i - V^i \|_H^2
    + 2 h_i \innerb{\fii - g^i}{U^{i-1} - V^{i-1}}
    + 2 h_i^2 \| \fii - g^i\|_H^2.
  \end{align*}
  By first taking the $\E_{\xi_i}$-expectation of this inequality and then applying
  also the $\E_{i-1}$-expectation, we find that
  \begin{align*}
    &\E_i \big[ \|U^i - V^i\|_H^2\big] - \E_{i-1}\big[ \|U^{i-1} - V^{i-1}\|_H^2\big] + 
    \frac{1}{2} \E_i \big[\|U^i - V^i - (U^{i-1} - V^{i-1})\|_H^2 \big]\\
    &\qquad+ 2 h_i \E_i \big[ \etan |U^i - V^i |_{\Vi}^p\big] \\
    &\quad\leq  2 h_i \E_i \big[ \kapi \|U^i - V^i \|_H^2 \big]
    + 2 h_i \E_{i-1}\big[ \innerb{\E_{\xi_i} \big[\fii - g^i \big]}{U^{i-1} - 
      V^{i-1}}\big]\\
    &\qquad + 2 h_i^2 \E_{\xi_i} \big[ \| \fii - g^i\|_H^2\big].
  \end{align*}
  Combining the previous two inequalities and summing up from $i = 1$ to $n \in 
  \{1,\dots,N\}$, we obtain
  \begin{align}\label{eq:proof_stabH_2}
    \begin{split}
      &\E_n \big[\|U^n - V^n\|_H^2\big] + \frac{1}{2}\sum_{i=1}^{n} \E_i \big[ \|U^i - V^i - 
      (U^{i-1} - V^{i-1})\|_H^2 \big]\\
      &\qquad + 2 \sum_{i=1}^{n} h_i \E_i \big[ \etai |U^i - V^i |_{\Vi}^p\big]\\
      &\quad\leq \|u_0 - v_0\|_H^2 + 2 \kappa \sum_{i=1}^{n} h_i \E_i \big[ \|U^i - V^i \|_H^2\big] \\
      &\qquad + 2 \sum_{i=1}^{n} h_i \E_{i-1}\big[ \innerb{\E_{\xi_i} \big[\fii - g^i 
      \big]}{U^{i-1} - V^{i-1}}\big]
      + 2 \sum_{i=1}^{N} h_i^2 \E_{\xi_i} \big[ \| \fii - g^i\|_H^2\big],
    \end{split}
  \end{align}
  where we only made the right-hand side bigger by summing to the final value $N$.
  In the following, denote $i_{\max} \in \{1,\dots,N\}$ such that $\max_{i \in \{1,\dots,N\}} 
  \E_i \big[\|U^i - V^i \|_H^2 \big] = \E_{i_{\max}} \big[\|U^{i_{\max}} - V^{i_{\max}}\|_H^2 
  \big]$. By Lemma~\ref{lem:measurability}, it follows that $U^{i-1} - V^{i-1}$ is 
  $\F_{i-1}$-measurable and thus independent of the $\F_{\xi_i}$-measurable random 
  variable $\fii - g^i$. Therefore, we find that
  \begin{align*}
    &2 \sum_{i=1}^{n} h_i \E_{i-1}\big[ \innerb{\E_{\xi_i} \big[\fii - g^i \big]}{U^{i-1} 
    - V^{i-1}}\big]\\
    &\quad\leq 2 \sum_{i=1}^{n} h_i \big\| \E_{\xi_i} \big[ \fii - g^i \big] \big\|_H 
    \E_{i-1} \big[ \| U^{i-1} - V^{i-1}\|_H \big] \\
    &\quad\leq 2 \big(\E_{i_{\max}} \big[\|U^{i_{\max}} - V^{i_{\max}}\|_H^2 
    \big]\big)^{\frac{1}{2}} \sum_{i=1}^{N} h_i \big\| \E_{\xi_i} \big[ \fii - g^i \big] \big\|_H.
  \end{align*}
  To keep the presentation compact, we abbreviate 
  \begin{align*}
    B_1 = \sum_{i=1}^{N} h_i^2 \E_i \big[ \| \fii - g^i\|_H^2\big] \quad 
    \text{and} \quad
    B_2 = \sum_{i=1}^{N} h_i \big\| \E_{\xi_i} \big[ \fii - g^i \big] \big\|_H.
  \end{align*} 
  Setting 
  \begin{align*}
    x_n  &= \E_n \big[\|U^n - V^n\|_H^2\big] + \frac{1}{2}\sum_{i=1}^{n} \E_i \big[ \|U^i - V^i - 
    (U^{i-1} - V^{i-1})\|_H^2 \big]\\
    &\quad + 2 \sum_{i=1}^{n} h_i \E_i \big[ \etai |U^i - V^i |_{\Vi}^p\big],
  \end{align*}
  we have $2\kappa \sum_{i=1}^{n}{ h_i \E_i \big[ \|U^i - V^i \|_{H}^2\big]} \le 2\kappa 
  \sum_{i=1}^{n}{ h_i x_i}$.
  We can now apply Gr\"onwall's inequality (Lemma~\ref{lem:gronwall}) to
  \eqref{eq:proof_stabH_2}. It follows that
  \begin{equation}\label{eq:proof_stabH_3}
      x_n \leq C
      \Big(\|u_0 - v_0\|^2  + 2 B_1
      + 2 \big(\E_{i_{\max}} \big[\|U^{i_{\max}} - V^{i_{\max}} \|_H^2 
      \big]\big)^{\frac{1}{2}} B_2 \Big),
  \end{equation}
  for $C = \frac{1}{1- 2 h \kappa} \exp\big(\frac{2\kappa T}{1- 2 h \kappa}\big)$.  
  As this inequality holds for every $n \in \{1,\dots,N\}$, it is also fulfilled for $i_{\max}$. 
  Thus, it follows that
  \begin{align*}
    &\E_{i_{\max}} \big[\|U^{i_{\max}} - V^{i_{\max}}\|_H^2\big]\\
    &\quad\leq C \big(\|u_0 -v_0\|^2 + 2 B_1+ 2 \big(\E_{i_{\max}} \big[\|U^{i_{\max}} 
    - V^{i_{\max}}\|_H^2 \big]\big)^{\frac{1}{2}} B_2\big).
  \end{align*}  
  We can now use that $x^2 \leq 2ax + b^2$ implies that $x \leq 2a +b$ for $a,b,x \in 
  [0,\infty)$ and find
  \begin{align*}
    \big(\E_{i_{\max}} \big[\|U^{i_{\max}}- V^{i_{\max}}\|_H^2\big]\big)^{\frac{1}{2}}
    \leq C^{\frac{1}{2}}
    \big(\|u_0 -v_0\|^2 + 2 B_1\big)^{\frac{1}{2}} + 2 C B_2.
  \end{align*}
  Inserting this bound in \eqref{eq:proof_stabH_3} and applying Young's inequality 
  (Lemma~\ref{lem:youngsInequality} for $\varepsilon = 1$), we then obtain
  \begin{align*}
    &\E_n \big[\|U^n -V^n\|_H^2\big] + \frac{1}{2}\sum_{i=1}^{n}\E_i \big[ \|U^i - V^i - 
    (U^{i-1} - V^{i-1})\|_H^2 \big]\\
    &\qquad + 2 \sum_{i=1}^{n} h_i \E_i \big[ \etai |U^i - V^i |_{\Vi}^2\big]\\
    &\quad\leq C \Big(\|u_0 -v_0\|^2 + 2 B_1
    + 2 C^{\frac{1}{2}}
    \big(\|u_0 - v_0\|^2  + 2 B_1\big)^{\frac{1}{2}} B_2+ 4 C B_2^2 \Big)\\
    &\quad\leq C \Big(\|u_0 -v_0\|^2 + 2 B_1
    + \big(\|u_0 - v_0\|^2  + 2 B_1\big) + C B_2^2 + 4 C B_2^2 \Big)\\
    &\quad= 2 C \big(\|u_0 -v_0\|^2 + 2 B_1\big) + 5 C^2 B_2^2.
  \end{align*}  
  It only remains to insert
  \begin{align*}
    B_2^2 
    = \Big( \sum_{i=1}^{N} h_i \big\| \E_{\xi_i} \big[ \fii - g^i \big] \big\|_H \Big)^2
    \leq T \sum_{i=1}^{N} h_i \big\| \E_{\xi_i} \big[ \fii - g^i \big] \big\|_H^2,
  \end{align*}
  to finish the proof.
\end{proof}

\begin{theorem}
  \label{thm:convH}
  Let Assumptions~\ref{ass:spaces}--\ref{ass:expectation} be fulfilled. 
  Further, let $f_{\xi_n} \in C([0,T]; H)$ almost surely and let $\fn = f_{\xi_n}(t_n) \in 
  L^2(\Omega;H)$ for all $n \in \{1,\dots,N\}$.
  Let $\{U^n\}_{n \in \{1,\dots,N\}}$ be the solution of \eqref{eq:scheme} and $u$ be the 
  solution of \eqref{eq:equation} that fulfills $u' \in C^{\gamma} ([0,T]; H)$, $\gamma \in 
  (0,1]$. 
  Moreover, let $\An u(t_n) \in L^2(\Omega; H)$ be fulfilled.
  
  Then for $2\kappa h_n \leq 2\kappa h < 1$ and $e^n = U^n - u(t_n)$, it follows that
  \begin{align*}
    &\E_n \big[\|e^n\|_H^2\big] + \frac{1}{2}\sum_{i=1}^{n}\E_i \big[ \|e^i - e^{i-1}\|_H^2 \big]
    + 2 \sum_{i=1}^{n} h_i \E_i \big[ \etai |e^i |_{\Vi}^p\big]\\
    &\quad\leq 8 h C \sum_{i = 1}^{N} h_i \E_{\xi_i} \big[ \big\| f_{\xi_i}(t_i) - \Ai u(t_i)
    - (f(t_i) - A(t_i) u(t_i))  \big\|_H^2 \big] \\
    &\qquad + 4 h^{1 + 2 \gamma} C |u|_{C^{\gamma}([0,T];H)}^2 T
    + 5 h^{2\gamma} C^2 |u'|_{C^{\gamma}([0,T];H)} T^2,
  \end{align*}
  where $C = \frac{1}{1- 2 h \kappa} \exp\big(\frac{2\kappa 
  T}{1- 2 h \kappa}\big)$ for all $n \in \{1,\dots,N\}$.
\end{theorem}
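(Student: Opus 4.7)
The plan is to cast the exact solution evaluated at the grid points, $u(t_n)$, as a solution of the auxiliary scheme \eqref{eq:stabH} and then invoke the stability Theorem~\ref{thm:stabilityH}. Concretely, I would set $V^n := u(t_n)$ (so $v_0 = u_0$) and define
\begin{align*}
  g^n := \frac{u(t_n) - u(t_{n-1})}{h_n} + \An u(t_n),
\end{align*}
which by construction makes $\{V^n\}$ satisfy \eqref{eq:stabH}. Since $u(t_n) \in V \incl \Vn$ almost surely, and since the hypothesis $\An u(t_n) \in L^2(\Omega;H)$ together with the deterministic character of $(u(t_n) - u(t_{n-1}))/h_n$ provides the required $\F_{\xi_n}$-measurability and square-integrability, $g^n$ is an admissible data sequence. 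Theorem~\ref{thm:stabilityH} then reduces the claim to estimating the two data quantities $\E_i\bigl[\|\fn - g^n\|_H^2\bigr]$ and $\bigl\|\E_{\xi_i}[\fn - g^i]\bigr\|_H^2$.

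The heart of the argument is a decomposition of the defect into a mean-zero random fluctuation and a deterministic truncation error. Using \eqref{eq:equation} evaluated pointwise at $t_n$, namely $f(t_n) - A(t_n) u(t_n) = u'(t_n)$, I would split
\begin{align*}
  \fn - g^n = X_n + Y_n,
\end{align*}
where
\begin{align*}
  X_n := f_{\xi_n}(t_n) - \An u(t_n) - \bigl(f(t_n) - A(t_n) u(t_n)\bigr),
  \qquad
  Y_n := u'(t_n) - \frac{u(t_n) - u(t_{n-1})}{h_n}.
\end{align*}
Assumption~\ref{ass:expectation} yields $\E_{\xi_n}[X_n] = 0$, and $Y_n$ is deterministic, so $\E_{\xi_n}[\fn - g^n] = Y_n$ and by orthogonality $\E_{\xi_n}\bigl[\|\fn - g^n\|_H^2\bigr] = \E_{\xi_n}\bigl[\|X_n\|_H^2\bigr] + \|Y_n\|_H^2$.

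The deterministic piece is then straightforward: writing
\begin{align*}
  Y_n = \frac{1}{h_n}\int_{t_{n-1}}^{t_n} \bigl(u'(t_n) - u'(s)\bigr) \diff{s}
\end{align*}
and using $u' \in C^\gamma([0,T];H)$ immediately gives $\|Y_n\|_H \leq |u'|_{C^{\gamma}([0,T];H)} h_n^{\gamma}$. Inserting both estimates into the output of Theorem~\ref{thm:stabilityH}, using $h_i^2 \leq h \cdot h_i$ to convert the $\sum h_i^2 \E_i[\|X_i\|_H^2]$ contribution into the stated $8hC$-prefactor form, and summing via $\sum_i h_i \leq T$, yields the three contributions on the right-hand side of the claim.

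The main subtlety I expect is the legitimacy of the reduction itself: verifying that $V^n = u(t_n)$ is genuinely an admissible input for Theorem~\ref{thm:stabilityH}, which hinges on the precise measurability and $L^2(\Omega;H)$-integrability of $g^n$ and therefore on the hypothesis $\An u(t_n) \in L^2(\Omega;H)$. The remaining, more mechanical difficulty is exploiting the orthogonality $\E_{\xi_n}[X_n] = 0$ carefully rather than passing through a naive $(a+b)^2 \leq 2a^2 + 2b^2$, so that the deterministic Hölder contribution is not artificially inflated and the sharp constants in the final bound are preserved.
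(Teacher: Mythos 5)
Your proposal is correct and follows essentially the same route as the paper: you reduce to Theorem~\ref{thm:stabilityH} by taking $V^n = u(t_n)$ with $g^n = \tfrac{1}{h_n}(u(t_n)-u(t_{n-1})) + \An u(t_n)$, and then bound the two data terms by splitting off the mean-zero randomization error and estimating the remaining consistency term via $u' \in C^{\gamma}([0,T];H)$, exactly as the paper does through Lemma~\ref{lem:auxillary_bounds_sumH}. Your only deviation is using the orthogonality $\E_{\xi_i}[X_i]=0$ instead of the paper's $\|a+b\|^2 \le 2\|a\|^2+2\|b\|^2$ in the $B_1$-estimate, which merely sharpens the constants and still yields the stated bound.
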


\begin{proof}
  We use $\{V^n\}_{n \in \{1,\dots,N\}}$ given by
  \begin{align*}
    \begin{cases}
      V^n - V^{n-1} + h_n \An V^n = h_n g^n \quad &\text{in } V_{\xi_n}^*, \quad n \in 
      \{1,\dots,N\}, \\
      V^0 = u_0 \quad &\text{in } H,
    \end{cases}
  \end{align*}
  where
  \begin{align*}
    g^n = \frac{1}{h_n} \big( u(t_n) - u(t_{n-1})\big) + \An u(t_n) \in L^2(\Omega; H).
  \end{align*}
  With this particular choice of $g^n$, we can now show that $V^n = u(t_n)$ for every 
  $n \in \{1.\dots,N\}$.
  Given the initial value $u_0$,  the solution $V^1$ is then given by
  \begin{align*}
    V^1 &= u_0 + h_1 g^1 - h_1 A_{\xi_1}(t_1) V^1\\
    &= u_0 + \big( u(t_1) - u(t_{0})\big)
    + h_1 A_{\xi_1}(t_1) u(t_1) - h_1 A_{\xi_1}(t_1) V^1\\
    &= u(t_1) + h_1 A_{\xi_1}(t_1) u(t_1) - h_1 A_{\xi_1}(t_1) V^1.
  \end{align*}
  Therefore, it follows that
  \begin{align*}
    (I + h_1 A_{\xi_1}(t_1) ) V^1 = (I + h_1 A_{\xi_1}(t_1) ) u(t_1) \quad \text{in } 
    V_{\xi_1}^*.
  \end{align*}
  Since $I + h_1 A_{\xi_1}(t_1)$ is injective, we find $V^1 = u(t_1)$ in $V_{\xi_1}$. 
  Recursively, it follows that $V^n = u(t_n)$ in $\Vn$ for all other $n \in \{1,\dots, N\}$.
  Together with the stability estimate from Theorem~\ref{thm:stabilityH} we find for 
  $e^n = U^n - V^n = U^n - u(t_n)$
  that
  \begin{align*}
    \E_n \big[\|e^n\|_H^2\big] + \frac{1}{2}\sum_{i=1}^{n}\E_i \big[ \|e^i - e^{i-1}\|_H^2 \big]
    + 2 \sum_{i=1}^{n} h_i \E_i \big[ \etai |e^i |_{\Vi}^2\big]
    \leq 4 C B_1 + 5 C^2 B_2^2,
  \end{align*}
  where 
  \begin{align*}
    B_1 &= \sum_{i=1}^{N} h_i^2 \E_i \big[ \| \fii - g^i\|_H^2\big], \quad 
    B_2 = \sum_{i=1}^{N} h_i \big\| \E_{\xi_i} \big[ \fii - g^i \big] \big\|_H,\\
    C &= \frac{1}{1 - 2 h \kappa} \exp\Big(\frac{2\kappa T}{1 - 2 \kappa T}\Big).
  \end{align*} 
  Applying  Lemma~\ref{lem:auxillary_bounds_sumH} for $u' \in 
  C^{\gamma}([0,T];H)$, it follows that
  \begin{align*}
    B_1&\leq h \sum_{i=1}^{N} h_i \E_{\xi_i} \Big[ \Big\| f_{\xi_i}(t_i) - \Ai u(t_i) 
    - \frac{1}{h_i} \int_{t_{i-1}}^{t_i} \big(f(t) - A(t) u(t)\big) \diff{t} 
    \Big\|_H^2 \Big] \\
    &\leq 2 h \sum_{i = 1}^{N} h_i \E_{\xi_i} \big[ \big\| f_{\xi_i}(t_i) - \Ai u(t_i)
    - (f(t_i) - A(t_i) u(t_i))  \big\|_H^2 \big] \\
    &\qquad + 2 h^{1 + 2 \gamma} |u|_{C^{\gamma}([0,T];H)}^2 T 
  \end{align*}
  and
  \begin{align*}
    B_2^2 
    &\leq T \sum_{i=1}^{N} h_i \Big\| \E_{\xi_i} \Big[ f_{\xi_i}(t_i) - \Ai u(t_i) 
    - \frac{1}{h_i} \int_{t_{i-1}}^{t_i} \big(f(t) - A(t) u(t)\big) \diff{t}  \Big] \Big\|_H^2\\
    &\leq h^{2\gamma} |u'|_{C^{\gamma}([0,T];H)} T^2.
  \end{align*}
  Altogether, we obtain
  \begin{align*}
    &\E_n \big[\|e^n\|_H^2\big] + \frac{1}{2}\sum_{i=1}^{n}\E_i \big[ \|e^i - e^{i-1}\|_H^2 
    \big] + 2 \sum_{i=1}^{n} h_i \E_i \big[ \etai |e^i |_{\Vi}^2\big]\\
    &\quad\leq 8 h C \sum_{i = 1}^{N} h_i \E_{\xi_i} \big[ \big\| f_{\xi_i}(t_i) - \Ai u(t_i)
    - (f(t_i) - A(t_i) u(t_i))  \big\|_H^2 \big] \\
    &\qquad + 4 h^{1 + 2 \gamma} C |u|_{C^{\gamma}([0,T];H)}^2 T
    + 5 h^{2\gamma} C^2 |u'|_{C^{\gamma}([0,T];H)} T^2.
  \end{align*}
\end{proof}

\begin{remark}
  The main results can all be modified to a slightly different setting, where the right-hand side 
  $f(t)$ takes values in $V^*$ and where the family $\{\xi_n\}_{n \in 
  \N}$ of random variables does not have to be mutually independent. In 
  return, this setting requires slightly stronger assumptions on the operator $A(t)$. 
  First, we assume additionally that there exists a constant $c_V \in (0,\infty)$ such that 
  $\|\cdot\|_V \leq c_V \big( \|\cdot \|_H + |\cdot|_V\big)$ is fulfilled.
  To generalize the a priori bound from Lemma~\ref{lem:apriori} and the stability 
  results from 
  Theorem~\ref{thm:stabilityH}, we need to assume that $\mu_A$ from 
  Assumption~\ref{ass:A}~(v) and $\eta_A$ from Assumption~\ref{ass:A}~(iii) are 
  strictly positive, respectively. Moreover, if there exist $\gamma \in (0,1]$ and 
  $C \in [0,\infty)$ such that
  \begin{align*}
    \sum_{i = 1}^{N} h_i \E_i \big[ \big\| f_{\xi_i}(t_i) - \Ai u(t_i) - (f(t_i) - A(t_i) u(t_i)) 
    \big\|_{\Vi^*}^2\big] \leq C h^{2 \gamma}
  \end{align*}
  is fulfilled and $u' \in C^{\gamma}([0,T];H)$, we obtain similar error bounds.
  We omit the proofs, which are very similar to the ones presented above.
\end{remark}

\section{Numerical experiments}
\label{sec:numExperiments}

To illustrate the theoretical convergence results for the randomized scheme in practice, we apply it to the parabolic differential equation \eqref{eq:parabolicPDE} as discussed in Section~\ref{sec:example}.
This boundary, initial-value problem fits our setting as already explained there. We 
also consider what happens when we replace the nonlinear diffusion term with linear 
diffusion, and a smoother exact solution.

In both cases, we consider the problem on the spatial domain $\D = [-1,1]
\times [-1,1]$ which we split into rectangular sub-domains $\D_{\ell}$, $\ell \in 
\{1,\ldots, s\}$, with $M_x$ rectangles along the $x$-axis and $M_y$ rectangles along 
the 
$y$-axis. We choose $\D_{\ell}$ such that they have an overlap of $0.2$ on all internal 
sides. This means that with $M_x = M_y = 3$, we have $s = M_x M_y = 9$ 
sub-domains with, e.g., $\D_{1} = [-1, -0.267] \times [-1, -0.267]$, $\D_{2} = [-0.467, 
0.467] \times [-1, -0.267]$ and $\D_{5} = [-0.467, 0.467] \times [-0.467, 0.467]$. Note 
that they are not uniform in size, because the sub-domains adjacent to the outer edge 
of $\D$ have no overlap on one or two sides.

We have to choose a strategy for which sub-problems to select in each time step, i.e.\ 
specify the probabilities $\P(\Omega_{\xi = B})$ for $B \subset 2^{\{1,\dots,s\}}$. We 
consider two strategies. In the first, 
we simply use $\P(\Omega_{\xi = \{\ell\}}) = 1/s$. Thus every sub-domain is equally 
probable to be chosen. As a minor variation, we instead select a set of $k$ 
sub-domains by drawing with replacement according to the uniform probabilities. 

In the second strategy, we make use of a predictor. In addition to the stochastic 
approximation, we compute a deterministic approximation $Z^n$ using the backward 
Euler method, but on a coarser spatial mesh. The idea is that while this approximation 
is less accurate, it should be significantly cheaper to compute and still resemble the 
true solution. In the $n^{\text{th}}$ time step, we compute $\Psi_n = |Z^{n-1}| + |Z^n| + 
|\tilde{f}(t_n, \cdot)| > 10^{-3}$. This function is either $0$ or $1$ and indicates where 
in the domain something is actually happening. For each sub-domain, we then check 
whether it is ``sufficiently active'' or not by evaluating $\|\Psi_n \chi_l\| \ge \rho 
\|\Psi_n\|$ for a parameter $\rho \in (0,1)$. We select the set of those sub-domains 
which 
pass the test with probability $1-\rho$ and the set of all the other sub-domains with 
probability $\rho$.

\subsection{A nonlinear example} \label{sec:nonlin_exp}
In our first experiment, we use the problem parameters $T = 1$, $p = 4$ and 
$\alpha(t) \equiv 1$. Further, we choose the source term $\tilde{f}$ such that the exact 
solution is given by $u(t, x, y) = \tilde{u}(x - r \cos(2\pi t), y - r \sin(2\pi t))$ with $r = 
1/2$,
\begin{equation*}
  \tilde{u}( x, y) = \Bigl[0.03 - \frac{10^{3/8}}{4} (x^2 + y^2)^{\frac{4}{3}} 
  \Bigr]_+^{\frac{3}{4}}
\end{equation*}
and $[\cdot]_+ = \max\{\cdot, 0\}$. This describes a localized pulse that starts centered 
at $(0.5, 0)$ and which then rotates around the origin at the constant distance $r$. 
The shape of the pulse is inspired by the closed-form Barenblatt solution to $\partial_t 
u = \nabla \cdot (|\nabla u(t,x)|^{p-2}\nabla u)$, see e.g.~\cite{KaminVazquez.1988}. At 
$t = 0$, this solution is a Dirac delta, which then expands into a cone-shaped peak for 
$t>0$. Our pulse is this solution frozen at the time $t = 0.001$. We note that
due to the sharp interface where the pulse meets the $x$-$y$-plane and to the sharp 
peak, $u$ is of low regularity.

We discretize the problem in space using central finite differences, such that the 
approximation of the $p$-Laplacian is 2nd-order accurate. We use $41$ computational 
nodes in each spatial dimension, for a total of $1681$ degrees of freedom. For the 
temporal discretization, we use the scheme~\eqref{eq:scheme}, along with one of the 
two strategies outlined above. For the first strategy, we try $k = 1$ and $k = 2$. For 
the second, we evaluate the different parameters $\rho = 0.01, 0.05, 0.1, 0.2$. 
We compute approximations for the different (constant) time steps $h_n = 2^{-5}, 
2^{-6}, \ldots, 2^{-13}$ and estimate their corresponding errors at the final time by running the method with $50$ random iterations and averaging. That is, we approximate
\begin{equation*}
  \E_N \big[\|e^N\|_H^2\big] \approx \frac{1}{50} \sum_{j=1}^{50}{ \|U^N - U_{\text{ref}}\|_H^2},
\end{equation*}
where $U_{\text{ref}}$ is the exact solution $u(t_N, \cdot, \cdot)$ evaluated at the spatial grid.

Figure~\ref{fig:nonlinear} shows the resulting relative errors vs.\ the time steps, with the first 
strategy in the left plot and the second strategy in the right. We observe that both 
strategies result in errors that decrease as $\mathcal{O}(h^{1/2})$, in line with 
Theorem~\ref{thm:convH}. We note, however, that the errors for the first strategy are 
noticeably larger than those of the second strategy. We have also used fewer 
sub-domains for the first strategy, $M_x = 3$ and $M_y = 1$, rather than the $M_x = 
3$ and $M_y = 3$ which we used for the second strategy. This is because the 
nonlinear $p$-Laplacian provides less smoothing than its linear counterpart, the 
Laplacian. The error incurred by choosing the ``wrong'' sub-domain therefore decays 
slowly, which makes this strategy work less well with too many sub-domains. The 
second strategy works better with more sub-domains, since it essentially adaptively 
groups them into only two larger sub-domains; the active set and the inactive set. 
Increasing the number of sub-domains increases the fidelity such that the choice of 
whether each sub-domain is active or not becomes easier, albeit at a higher 
computational cost. If the spatial discretization is using finite elements, the
limit case would be when every element is its own subdomain. This is what is
considered in~\cite{StoneGeigerLord.2017} for a deterministic scheme, where it is,
indeed, observed that the overhead costs can be prohibitive even when using very
efficient data structures.

\begin{figure}
  \includegraphics[width=0.49\textwidth]{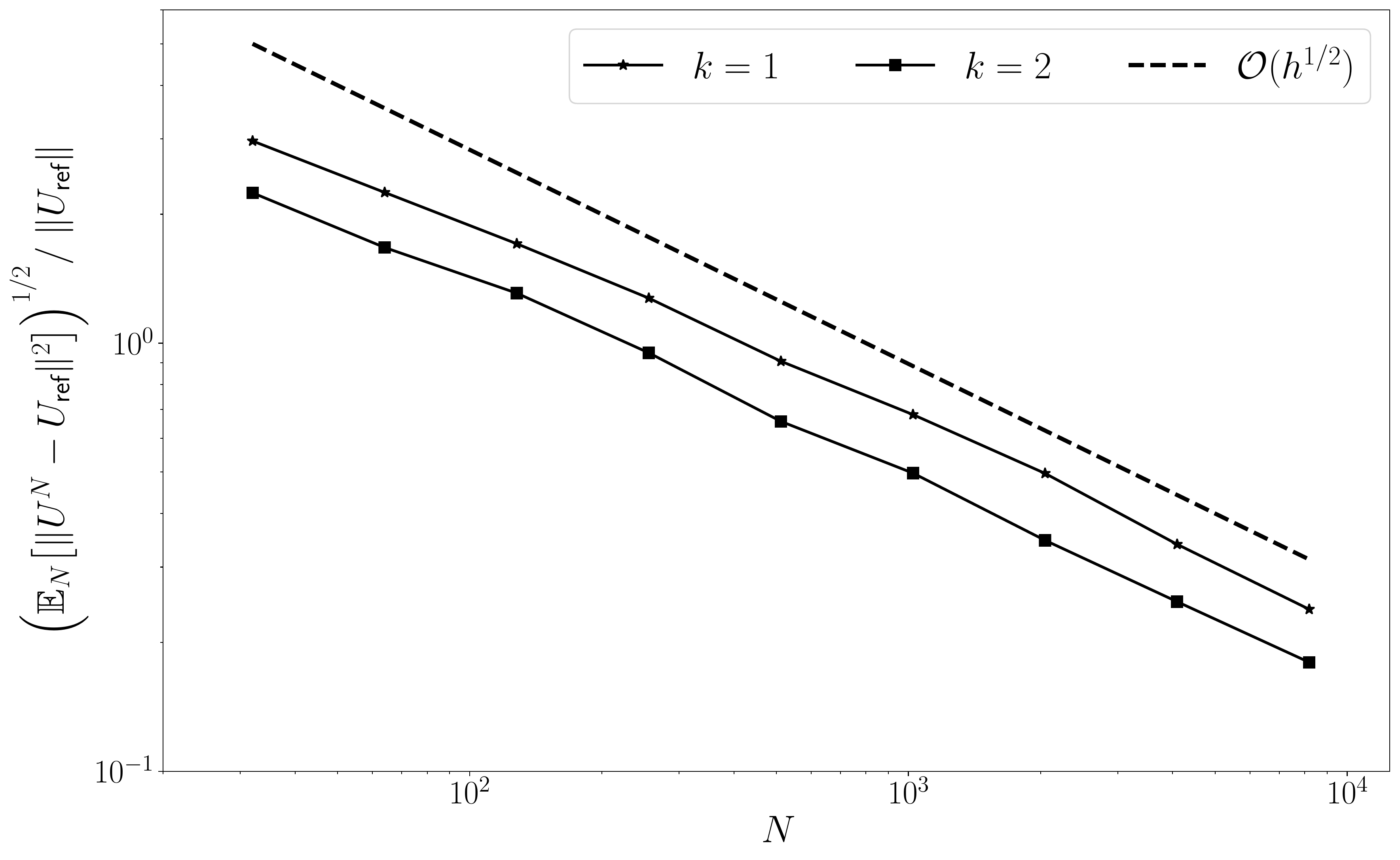}
  \includegraphics[width=0.49\textwidth]{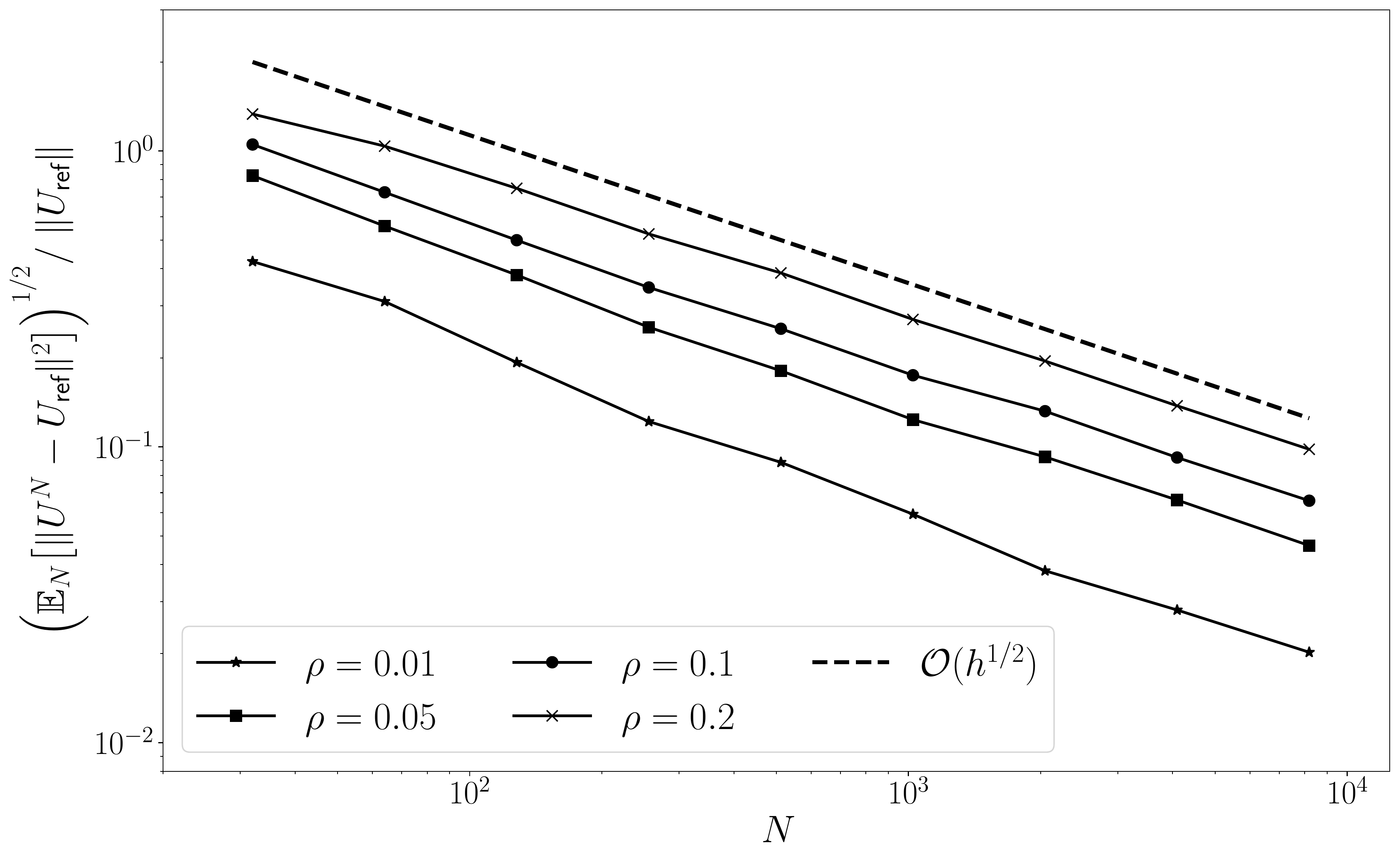}
  \caption{The relative errors $\big(\E_N\big[ \|U^N - U_{\text{ref}}\|^2 \big] \big)^{1/2} / \; \|U_{\text{ref}}\|$ for the nonlinear setting described in Section~\ref{sec:nonlin_exp}. The left plot uses the first randomized strategy and the right plot uses the second strategy. We observe that the errors decay as $\mathcal{O}(h^{1/2})$, in line with Theorem~\ref{thm:convH}, irrespective of the choice of $\rho$ or $k$. A smaller $\rho$ or larger $k$ decreases the error, but of course also incurs a higher computational cost.
    }
  \label{fig:nonlinear}
\end{figure}

\subsection{A linear example} \label{sec:lin_exp}
As a second experiment, we consider a linear version of the previous problem. We use the same parameters as in the previous section, except that we set $p = 2$ and $\alpha(t) = 0.1$, and that the rotating pulse is now Gaussian rather than a sharp peak. More precisely, the exact solution is given by
\begin{equation*}
 u(t, x, y) =  e^{-100 (x - r \cos(2\pi t))^2 - 100 (y -  r \sin(2\pi t))^2}.
\end{equation*}

The resulting errors are shown in Figure~\ref{fig:linear}. Again, we note that the first, 
uniform, strategy converges as $\mathcal{O}(h^{1/2})$, in line with 
Theorem~\ref{thm:convH}. The second strategy with $\rho = 0.01$ performs 
significantly better and converges as $\mathcal{O}(h)$ until the spatial error starts to 
dominate. This is essentially the same behaviour as if we would apply backward Euler 
to 
the full problem, but the method only updates the approximation on the most relevant 
sub-domains and is therefore cheaper to evaluate. This improved convergence order 
is possible due to the extra smoothness present in this linear problem. In the error 
bound of Theorem~\ref{thm:convH}, the first term becomes small due to the used 
strategy, and because the solution is smooth the remaining terms are of size $h^3$ 
and $h^2$, respectively.

Increasing the parameter $\rho$ means that we disregard more of the information from 
the predictor, and as seen in Figure~\ref{fig:linear} this causes the convergence order 
to decrease towards $1/2$. On the other hand, setting $\rho = 0$ means that we 
always choose all the sub-domains and thereby do more computations than if we 
would simply solve the full problem directly. The parameter $\rho$ is therefore a 
design parameter, and further research is required on how to choose it optimally for 
specific problem classes. Regardless of the choice, however, we still have 
$\mathcal{O}(h^{1/2})$-convergence.
\begin{figure}
  \includegraphics[width=0.49\textwidth]{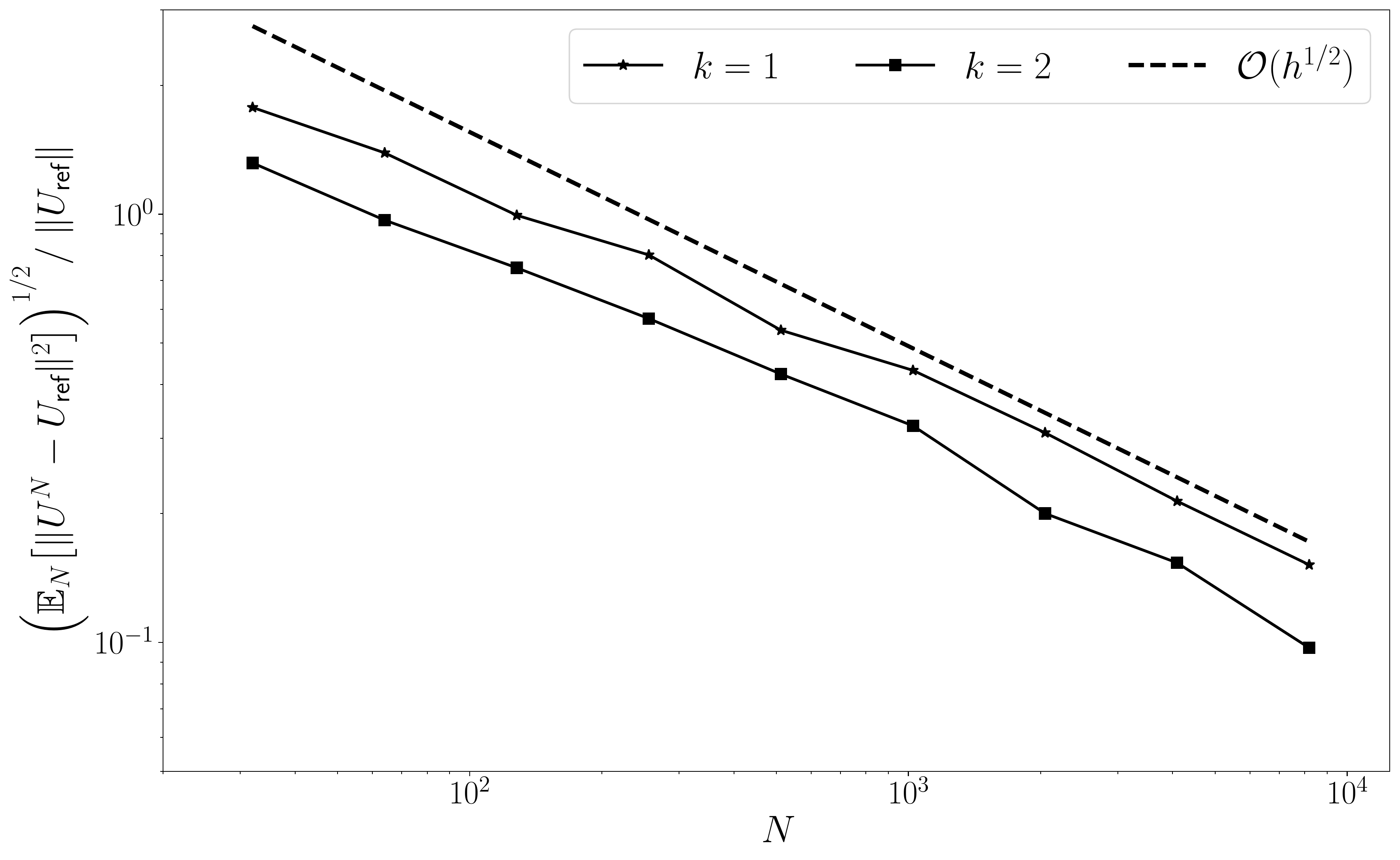}
  \includegraphics[width=0.49\textwidth]{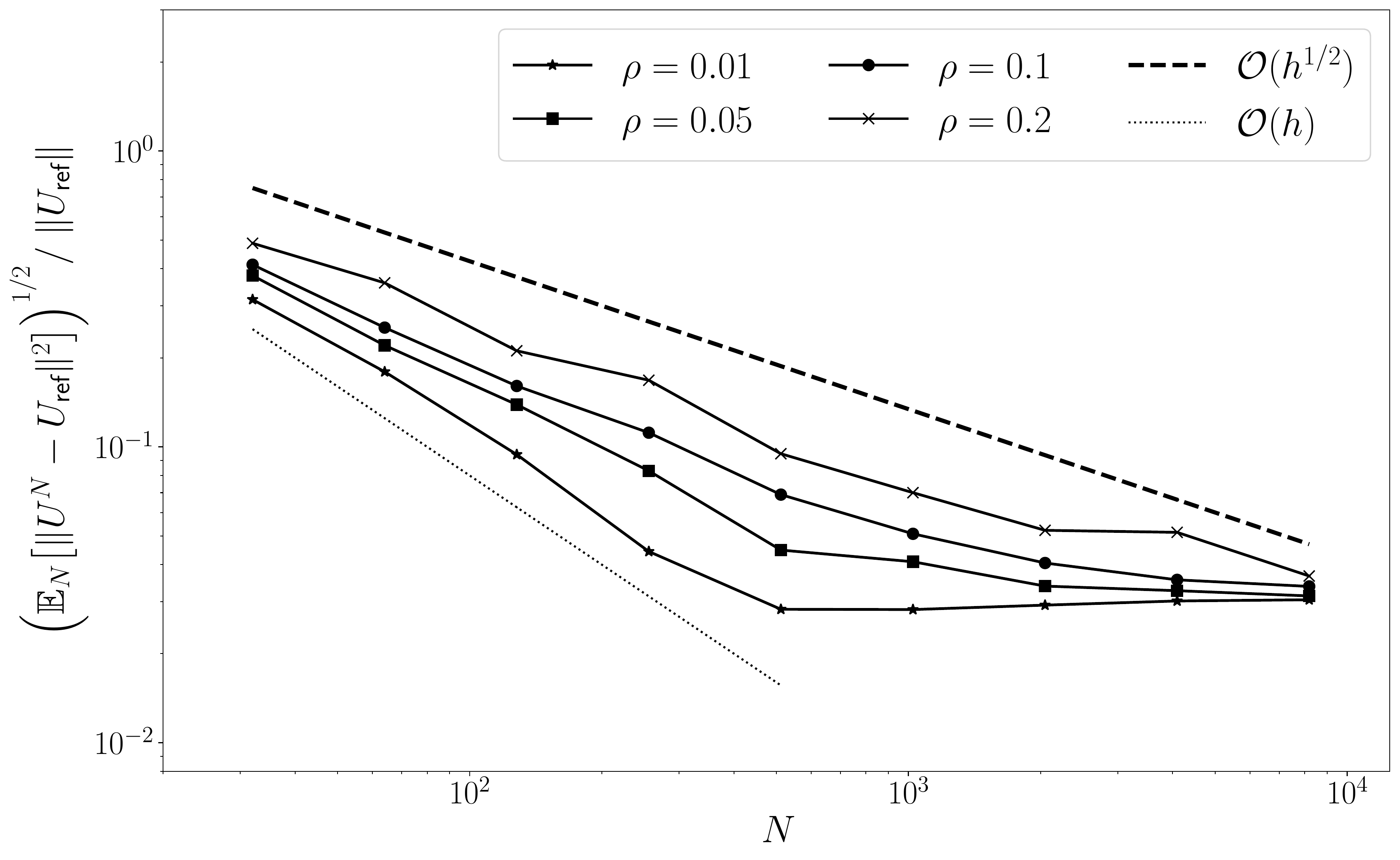}
  \caption{The relative errors $\big(\E_N\big[ \|U^N - U_{\text{ref}}\|^2 \big] \big)^{1/2} / \; 
  \|U_{\text{ref}}\|$ for the linear setting described in Section~\ref{sec:lin_exp}. The left 
  plot uses the first randomized strategy and the right plot uses the second strategy. 
  We observe that the errors for the first strategy decay as $\mathcal{O}(h^{1/2})$, 
  similarly to the nonlinear case. For the second strategy, large $\rho$ also leads to 
  convergence of order 1/2, while sufficiently small $\rho$ leads to faster convergence 
  of order $1$. We note that the errors plateau at around $2 \cdot 10^{-2}$ because we 
  compare the numerical approximations to the exact solution; this is the size of the 
  spatial error.    }
  \label{fig:linear}
\end{figure}

\appendix  
\section{Auxiliary results}
\label{sec:appendix}
In this appendix, we collect a few useful inequalities and technical results that are 
needed in the paper.

\begin{lemma}[Gr\"onwall]\label{lem:gronwall}
  Let $(u_n)_{n \in N}$ and $(b_n)_{n \in N}$ be two nonnegative sequences that 
  satisfy, for given $a \in [0,\infty)$ and $n \in \N$, that $u_n \leq a + \sum_{i=1}^{n} b_i 
  u_i$. For $b_n \in [0,1)$, it then follows that 
    \begin{align*}
      u_n \leq \frac{a}{1 - b_n} \exp\Big(\sum_{i=1}^{n-1} \frac{b_i}{1 - b_n} \Big).
    \end{align*}
\end{lemma}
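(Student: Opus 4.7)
The result is an implicit discrete Grönwall inequality (because $u_n$ itself appears in the sum on the right). The natural first move is to make it explicit. Separating the $i=n$ term in the hypothesis yields $(1-b_n) u_n \leq a + \sum_{i=1}^{n-1} b_i u_i$, and since $b_n < 1$ I can divide through by $1-b_n$ to obtain the explicit inequality
\begin{align*}
u_n \leq \frac{a}{1-b_n} + \sum_{i=1}^{n-1} \frac{b_i}{1-b_n}\, u_i.
\end{align*}

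To close the argument I need an analogous explicit recursion for every $k \leq n$, not only at the endpoint. Applying the same isolation at each intermediate index $k$ gives $u_k \leq \frac{a}{1-b_k} + \sum_{i=1}^{k-1} \frac{b_i}{1-b_k} u_i$. Under the convention used when the lemma is invoked in the main text (see Lemma~\ref{lem:apriori} and Theorem~\ref{thm:stabilityH}, where $b_i = 2\kappa h_i$ and the uniform upper bound $2\kappa h$ plays the role of $b_n$), the quantity $b_n$ dominates all previous $b_k$, so I may majorize and obtain
\begin{align*}
u_k \leq \frac{a}{1-b_n} + \sum_{i=1}^{k-1} \frac{b_i}{1-b_n}\, u_i \qquad \text{for all } k \leq n.
\end{align*}

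Next I would invoke (or prove in one inductive pass) the classical explicit discrete Grönwall lemma: whenever $v_k \leq c + \sum_{i=1}^{k-1} \beta_i v_i$ for every $k \leq n$ with $c, \beta_i \geq 0$, then $v_n \leq c \prod_{i=1}^{n-1}(1+\beta_i)$. The induction is short: assuming the bound for all indices $<n$ and substituting, the sum on the right telescopes via
\begin{align*}
1 + \sum_{k=1}^{n-1} \beta_k \prod_{i=1}^{k-1}(1+\beta_i) = \prod_{i=1}^{n-1}(1+\beta_i),
\end{align*}
which follows from the identity $\prod_{i=1}^{k}(1+\beta_i) - \prod_{i=1}^{k-1}(1+\beta_i) = \beta_k \prod_{i=1}^{k-1}(1+\beta_i)$. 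Applied with $c = a/(1-b_n)$ and $\beta_i = b_i/(1-b_n)$, this delivers the product form $u_n \leq \frac{a}{1-b_n} \prod_{i=1}^{n-1}\bigl(1 + \frac{b_i}{1-b_n}\bigr)$, and the elementary inequality $1 + x \leq e^x$ for $x \geq 0$ upgrades this to the stated exponential bound.

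The only point requiring attention is the majorization step in which $1/(1-b_k)$ is replaced by $1/(1-b_n)$; this is harmless provided $b_n$ is understood as a uniform upper bound on the $b_k$, which is precisely how the lemma is used in Section~\ref{sec:wellDef} and Section~\ref{sec:convH}. Everything else reduces to the standard explicit Grönwall recursion and the pointwise bound $1+x\leq e^x$, neither of which presents any real difficulty.
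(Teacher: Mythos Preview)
The paper does not supply a proof of Lemma~\ref{lem:gronwall}; it is stated in the appendix without argument or reference, so there is nothing to compare your approach against. Your argument is sound on its own terms, and in fact the caveat you raise about the majorization step is not a minor technicality but a genuine correction: as literally stated, the lemma is false. Take $a=1$, $b_1=\tfrac{1}{2}$, $b_2=0$; then $u_1=u_2=2$ satisfies the hypothesis at every index, yet the claimed conclusion at $n=2$ would assert $u_2 \leq \exp(1/2)<2$. The bound only holds when $b_n$ in the conclusion is read as $\max_{k\le n} b_k$ (equivalently, when $(b_k)$ is nondecreasing), which is precisely the interpretation you supply and precisely how the lemma is invoked in Lemma~\ref{lem:apriori} and Theorem~\ref{thm:stabilityH}, where $b_i = 2\kappa h_i$ is immediately majorized by $2\kappa h$. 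Under that reading, your isolation of the implicit term, followed by the standard explicit Gr\"onwall induction and the estimate $1+x\le e^{x}$, is the natural and complete proof.
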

  
\begin{lemma}[Scaled Young's inequality]
  \label{lem:youngsInequality}
  For $a, b \in [0,\infty)$, $\varepsilon \in (0,\infty)$, it follows that $a b \leq \varepsilon 
  a^2 + \frac{1}{4 \varepsilon} b^2$.
\end{lemma}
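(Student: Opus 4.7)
The statement is an elementary algebraic inequality, so the plan is simply to reduce it to the non-negativity of a square. Specifically, I would start from the trivial observation that for any real number $x$, we have $x^2 \geq 0$, and then choose $x$ appropriately to produce the desired inequality.

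Concretely, the plan is to consider the expression
\begin{equation*}
  \Bigl( \sqrt{\varepsilon}\, a - \frac{1}{2\sqrt{\varepsilon}}\, b \Bigr)^2 \geq 0,
\end{equation*}
which is well-defined since $\varepsilon > 0$. Expanding the square gives
\begin{equation*}
  \varepsilon a^2 - a b + \frac{1}{4\varepsilon} b^2 \geq 0,
\end{equation*}
and rearranging immediately yields $ab \leq \varepsilon a^2 + \frac{1}{4\varepsilon} b^2$. The assumption $a, b \geq 0$ is not actually required for this step, but it is consistent with how the lemma is applied in the rest of the paper.

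As an alternative route, one could invoke the standard AM-GM inequality $2xy \leq x^2 + y^2$ with the substitution $x = \sqrt{2\varepsilon}\, a$ and $y = b/\sqrt{2\varepsilon}$, which produces the same bound. Either way, there is no real obstacle: the argument is a one-line completion of the square, and the only thing to watch is the bookkeeping of the factor $\tfrac{1}{4\varepsilon}$ versus $\tfrac{1}{2\varepsilon}$ when expanding. No auxiliary lemmas, no limit arguments, and no reference to any of the probabilistic or functional-analytic machinery of the paper are needed.
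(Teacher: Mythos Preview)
Your proof is correct; the completion-of-the-square argument is the standard elementary route and there is nothing to fix. The paper itself does not give a proof of this lemma but simply cites \cite[Appendix~B.2~d]{Evans.1998}, where exactly the argument you outline can be found, so your proposal is in fact more complete than what appears in the paper.
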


A proof can be found in \cite[Appendix~B.2~d]{Evans.1998}.

\begin{lemma}
  \label{lem:measurability}
  Let Assumptions~\ref{ass:spaces}--\ref{ass:expectation} be fulfilled. 
  Let $Q \subseteq V$ be a countable, dense subset of $V$, $V_{\xi}$ and $H$.
  Let the function $g \colon \Omega \times H \to V^*$ be given. Further, for $v \in H$ 
  the mapping $\omega \mapsto \dualV{g(\omega, v)}{w}$ is measurable for $v \in H$ 
  and $w \in Q$ and for almost every $\omega \in \Omega$ the mapping $v \mapsto 
  \dualV{g(\omega, v)}{w}$ continuous for every $v, w \in V_{\xi(\omega)}$.
  For every $\omega \in \Omega$, the function $g$ has a unique root which lies in 
  $V_{\xi(\omega)}$. 
  We denote this root by $r(\omega) \in V_{\xi(\omega)} $, i.e. $g(\omega, r(\omega)) = 
  0$.
  Then the function $r \colon \Omega \to H$ is measurable.
\end{lemma}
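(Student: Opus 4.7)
The plan is to show that the graph
\[ G = \{(\omega, v) \in \Omega \times H : v = r(\omega)\} \]
is measurable in $\Omega \times H$, so that $r$ is measurable by the measurable-graph theorem (the graph is a single-valued selection by uniqueness). The key reformulation is that since each $g(\omega, v)$ is a continuous linear functional on $V$ and $Q$ is dense in $V$, the condition $g(\omega, v) = 0$ in $V^*$ is equivalent to the countable family of scalar conditions $\langle g(\omega, v), q\rangle = 0$ for every $q \in Q$. Combined with the prescribed location of the root, this gives
\[ G = \bigl\{(\omega, v) \in \Omega \times H : v \in V_{\xi(\omega)}\bigr\} \; \cap \; \bigcap_{q \in Q} \bigl\{(\omega, v) \in \Omega \times H : \langle g(\omega, v), q\rangle = 0\bigr\}. \]

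To turn this into a statement about preimages $r^{-1}(B_H(z, \rho))$ of open balls with $z \in Q$ and rational $\rho > 0$, I would approximate the root by its ``$Q$-traces''. Since $Q$ is dense in $V_{\xi(\omega)}$ (a consequence of $V \hookrightarrow V_{\xi(\omega)}$ densely together with the density of $Q$ in $V$), and $v \mapsto \langle g(\omega, v), q_j\rangle$ is continuous on $V_{\xi(\omega)}$, one can approximate $r(\omega)$ in the $V_{\xi(\omega)}$-norm (hence in the $H$-norm) by elements $q \in Q$ on which each of the finitely many pairings $\langle g(\omega, q), q_j\rangle$ becomes arbitrarily small. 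This supplies the forward half of an identity of the form
\[ r^{-1}(B_H(z, \rho)) = \bigcap_{k, l \in \N} \bigcup_{q \in Q \cap B_H(z, \rho)} \bigcap_{j = 1}^{l} \bigl\{\omega \in \Omega : |\langle g(\omega, q), q_j\rangle| < 1/k\bigr\}, \]
whose right-hand side is manifestly measurable by the hypothesis on $\omega \mapsto \langle g(\omega, q), q_j\rangle$ and the enumeration $\{q_j\}_{j \geq 1}$ of $Q$.

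The main obstacle is the reverse inclusion. Given a diagonal sequence of approximate roots $q_m \in Q \cap B_H(z, \rho)$ with vanishing pairings, one extracts a weakly convergent subsequence $q_m \rightharpoonup v^* \in \overline{B_H(z, \rho)}$ in $H$; identifying $v^*$ with $r(\omega)$ requires passing to the limit in $\langle g(\omega, q_m), q_j\rangle \to 0$, which amounts to a form of lower semi-continuity of $g(\omega, \cdot)$ with respect to weak $H$-convergence. In the abstract setting this is delicate, since continuity is only postulated in the stronger $V_{\xi(\omega)}$-topology; in the concrete application to Lemma~\ref{lem:ExistenceMeasurability} this gap is bridged by the monotonicity and coercivity ingredients of Assumption~\ref{ass:A}, which force the approximating sequence to converge and identify its limit as the unique root. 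A more robust alternative is to invoke a von Neumann--Aumann measurable selection theorem for the single-valued multifunction $\omega \mapsto \{r(\omega)\}$ once the graph $G$ has been shown to be $\F \otimes \mathcal{B}(H)$-measurable, exploiting the Carathéodory-type joint measurability of $(\omega, v) \mapsto \langle g(\omega, v), q\rangle$ on the measurable graph $\{(\omega, v) : v \in V_{\xi(\omega)}\}$.
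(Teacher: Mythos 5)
Your reduction to countably many scalar conditions indexed by $Q$ is in the right spirit, and the forward inclusion of your identity for $r^{-1}(B_H(z,\rho))$ (approximating $r(\omega)$ by elements of $Q$ in the $V_{\xi(\omega)}$-norm and using continuity of $v \mapsto \dualV{g(\omega,v)}{q_j}$) is sound. But the proof is not complete: as you yourself note, the reverse inclusion is exactly the crucial step, and neither of the suggested remedies is available under the hypotheses of this lemma. Monotonicity and coercivity are \emph{not} assumptions here -- the lemma is stated for a general $g$ with a unique root, and it is used as a black box inside Lemma~\ref{lem:ExistenceMeasurability}, so one cannot import the structure of Assumption~\ref{ass:A} into its proof. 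The alternative route via graph measurability plus a von Neumann--Aumann/projection argument also has an unfilled hole: joint $\F\otimes\mathcal{B}(H)$-measurability of $(\omega,v)\mapsto \dualV{g(\omega,v)}{q}$ does not follow from the stated hypotheses by the usual Carath\'eodory argument, because continuity in $v$ is only assumed on the varying subspace $V_{\xi(\omega)}$ in its own (stronger) topology, not on $H$; and the extraction of a weak $H$-limit $v^*$ of approximate roots cannot be identified with $r(\omega)$ without some semicontinuity of $g(\omega,\cdot)$ with respect to weak convergence, which is nowhere assumed. So as written the argument stops precisely where the work begins.

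For comparison, the paper's proof avoids weak limits and selection theorems altogether. It writes $r^{-1}(B)$ using \emph{exact} zero conditions, $r^{-1}(B)=\bigcap_{v\in Q,\|v\|_V=1}\bigcup_{u\in B}\{\omega: \dualV{g(\omega,u)}{v}=0\}$, which settles the case of countable $B$ immediately from the assumed measurability in $\omega$. For uncountable $B$ it introduces, for fixed $v\in Q$ and $\varepsilon>0$, the multivalued map $r^v_\varepsilon(\omega)=\{u\in H: |\dualV{g(\omega,u)}{v}|<\varepsilon\}$ and shows $(r^v_\varepsilon)^{-1}(B)=(r^v_\varepsilon)^{-1}(B\cap Q)$, using only the continuity of $u\mapsto\dualV{g(\omega,u)}{v}$ together with density of $Q$, and then represents $r^{-1}(B)$ as the countable intersection $\bigcap_{v\in Q,\|v\|_V=1}\bigcap_{i\in\N}(r^v_{1/i})^{-1}(B\cap Q)$. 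The structural difference from your sketch is that the openness of the strict inequality and the density of $Q$ are used to replace the uncountable ball by its countable trace $B\cap Q$ \emph{inside} each approximate-root preimage, rather than trying to pass to a limit of approximate roots; no compactness, monotonicity, or measurable-selection machinery enters. If you want to complete your argument, you should either prove your reverse inclusion under the lemma's hypotheses alone or restructure the countable reduction along these lines.
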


A similar proof can be found in \cite[Lemma~2.1.4]{EisenmannPhD.2019} and 
\cite[Lemma~4.3]{EKKL.2019}. The main difference in this version is that the function 
$g$ maps from $\Omega \times H$ instead of $\Omega \times V$ and therefore some 
small technical alterations have to be considered.

\begin{proof}[Proof of Lemma~\ref{lem:measurability}]
  To prove that $r$ is measurable, we show that $r^{-1}(B) \in \F$ for every open set $B$ 
  in $H$. First, we notice that
  \begin{align*}
    r^{-1} (B) 
    &= \{ \omega \in \Omega \colon r(\omega) \in B \} \\
    &= \{ \omega \in \Omega \colon \text{ there exists $u \in B$ such that } g(\omega, u) = 
    0 \} \\
    &= \{ \omega \in \Omega \colon \text{ there exists $u \in B$ such that } 
    \dualV{g(\omega, u)}{v} = 0\\ 
    &\hspace{7cm}\text{ for all } v \in Q, \|v\|_V = 1\} \\
    &= \bigcap_{ v \in Q, \|v\|_V=1} \{ \omega \in \Omega \colon \text{ there exists $u 
      \in B$ such that } \dualV{g(\omega, u)}{v} = 0 \} \\
    &= \bigcap_{ v \in Q, \|v\|_V=1} \bigcup_{ u \in B} \{ \omega \in \Omega \colon 
    \dualV{g(\omega, u)}{v} = 0 \}.
  \end{align*}
  Since $\omega \mapsto \dualV{g(\omega, v)}{w}$ is measurable for $v \in H$ 
  and $w \in Q$, the set 
  \begin{align*}
    \{ \omega \in \Omega \colon \dualV{g(\omega, u)}{v} = 0 \}
    = \big(\dualV{g(\cdot, u)}{v}\big)^{-1}(0)
  \end{align*} 
  is an element of $\F_{\xi}$ for $v \in Q$ and $u \in H$. If the set $B$ only contains a 
  countable amount of elements, it follows directly that $r^{-1} (B) \in \F_{\xi}$. 
  
  In the following, it remains to address the cases where $B$ is not countable.  
  For $\varepsilon \in (0,\infty)$ small enough and a fixed $v \in Q$, we introduce the 
  multi-valued mapping
  \begin{align*}
    r_{\varepsilon}^v \colon \Omega \to 2^H, \quad
    r_{\varepsilon}^v (\omega) &= \{  u \in H \colon | \dualV{g(\omega, u)}{v} | < \varepsilon 
    \}.
  \end{align*}
  For $B \subseteq H$ open, it follows that
  \begin{align*}
    \big(r_{\varepsilon}^v\big)^{-1}(B) 
    &= \{ \omega \in \Omega \colon r_{\varepsilon}^v(\omega) \in B \}\\
    &= \{ \omega \in \Omega \colon \text{ there exists $u \in B$ such that } | 
    \dualV{g(\omega, u)}{v} | 
    < \varepsilon \}\\
    &= \bigcup_{ u \in B} \{ \omega \in \Omega \colon | \dualV{g(\omega, u)}{v} | < 
    \varepsilon \}.
  \end{align*}
  In the following, we will show that
  \begin{align*}
    \big(r_{\varepsilon}^v\big)^{-1}(B) = \big(r_{\varepsilon}^v\big)^{-1}(B \cap Q). 
  \end{align*}
  Since $B \cap Q \subseteq B$, it directly follows that $\big(r_{\varepsilon}^v\big)^{-1}(B 
  \cap Q) \subseteq \big(r_{\varepsilon}^v\big)^{-1}(B)$. 
  It remains to verify that $\big(r_{\varepsilon}^v\big)^{-1}(B) \subseteq 
  \big(r_{\varepsilon}^v\big)^{-1}(B \cap Q)$. Let $\omega \in 
  \big(r_{\varepsilon}^v\big)^{-1}(B)$, i.e. there exists $u \in B$ such that
  \begin{align*}
    u \in r_{\varepsilon}^v(\omega) = \{ w \in H \colon | \dualV{g(\omega, w)}{v}| < 
    \varepsilon \}.
  \end{align*}
  Since $v \mapsto \dualV{g(\omega, v)}{w}$ is continuous for every $v, w \in 
  V_{\xi(\omega)}$ and $Q$ is dense in $H$, there exists 
  $u_Q \in B \cap Q$ such 
  that $| \dualV{g(\omega, u_Q)}{v}| < \varepsilon$. Thus, $u_Q \in 
  r_{\varepsilon}^v(\omega)$ and in particular 
  $\omega \in \big(r_{\varepsilon}^v\big)^{-1}(B \cap Q)$.
  This shows altogether that $\big(r_{\varepsilon}^v\big)^{-1}(B) = 
  \big(r_{\varepsilon}^v\big)^{-1}(B \cap Q)$.
  
  We can now finish the proof as
  \begin{align*}
    r^{-1}(B) 
    = \bigcap_{v \in Q, \|v\|_V = 1} \bigcap_{i \in \N} \big(r_{\frac{1}{i}}^v\big)^{-1}(B) 
    = \bigcap_{v \in Q, \|v\|_V = 1} \bigcap_{i \in \N} \big(r_{\frac{1}{i}}^v\big)^{-1}(B \cap Q) 
    \in \F_{\xi}
  \end{align*}
  is fulfilled.
\end{proof}

\begin{lemma}\label{lem:auxillary_bounds_sumH}
  Let Assumptions~\ref{ass:spaces}--\ref{ass:expectation} 
  be fulfilled. Further, let $f_{\xi_n}$ be an element of $C([0,T]; V_{\xi_n}^*)$ 
  almost surely for every $n \in \{1.\dots.N\}$. For $u' \in C^{\gamma}([0,T];H)$, 
  $\gamma \in (0,1]$, and a maximal 
  step size $h = \max_{i \in \{1,\dots,N\}} h_i$, it the follows that
  \begin{align*}    
    &\sum_{i=1}^{N} h_i \E_{\xi_i} \Big[ \Big\| f_{\xi_i}(t_i) - \Ai u(t_i) 
    - \frac{1}{h_i} \int_{t_{i-1}}^{t_i} \big(f(t) - A(t) u(t)\big) \diff{t} 
    \Big\|_H^2 \Big] \\
    &\leq 2 \sum_{i = 1}^{N} h_i \E_{\xi_i} \big[ \big\| f_{\xi_i}(t_i) - \Ai u(t_i)
    - (f(t_i) - A(t_i) u(t_i))  \big\|_H^2 \big] \\
    &\quad + 2 h^{2 \gamma} |u'|_{C^{\gamma}([0,T];H)}^2 T 
  \end{align*}
  and 
  \begin{align*}
    &\sum_{i=1}^{N} h_i \Big\| \E_{\xi_i} \Big[ 
    f_{\xi_i}(t_i) - \Ai u(t_i) 
    - \frac{1}{h_i} \int_{t_{i-1}}^{t_i} \big(f(t) - A(t) u(t)\big) \diff{t}  \Big] \Big\|_H^2\\
    &\leq h^{2\gamma} |u'|_{C^{\gamma}([0,T];H)}^2 T,
  \end{align*}
  where $|u'|_{C^{\gamma}([0,T];H)}$ is the H\"older semi-norm with values in $H$ of 
  the function $u'$.
\end{lemma}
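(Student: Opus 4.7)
The plan is to reduce both bounds to a single quadrature estimate for $u'$ by using the defining equation \eqref{eq:equation}, which gives the pointwise identity $f(t) - A(t)u(t) = u'(t)$ almost everywhere. With this substitution, the deterministic integrand $f(t) - A(t)u(t)$ is simply $u'(t)$, so the expression $\frac{1}{h_i}\int_{t_{i-1}}^{t_i}(f(t) - A(t)u(t))\diff{t}$ becomes the left-rectangle average of $u'$ on the $i$-th subinterval. In particular,
\begin{equation*}
  u'(t_i) - \frac{1}{h_i}\int_{t_{i-1}}^{t_i} u'(t)\diff{t}
  = \frac{1}{h_i}\int_{t_{i-1}}^{t_i}\bigl(u'(t_i) - u'(t)\bigr)\diff{t},
\end{equation*}
and H\"older continuity of $u'$ bounds this in $H$ by $|u'|_{C^\gamma([0,T];H)}\,h_i^\gamma$. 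Summing $h_i$ times the square gives $|u'|_{C^\gamma}^2\sum_{i=1}^N h_i\,h_i^{2\gamma} \le h^{2\gamma}|u'|_{C^\gamma}^2\,T$, which is the quadrature error estimate that drives both bounds.

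For the first bound, I would insert $\pm\bigl(f(t_i) - A(t_i)u(t_i)\bigr)$ inside the norm and apply $\|a+b\|_H^2 \le 2\|a\|_H^2 + 2\|b\|_H^2$. The ``$a$''-contribution reproduces the stochastic-sampling term $\|f_{\xi_i}(t_i) - A_{\xi_i}(t_i)u(t_i) - (f(t_i) - A(t_i)u(t_i))\|_H^2$ (weighted by $h_i$ and summed after taking $\E_{\xi_i}$), which is precisely the first term on the right-hand side with its factor of $2$. The ``$b$''-contribution is deterministic, so the expectation does nothing, and by the rewrite above it equals $\|u'(t_i) - \frac{1}{h_i}\int_{t_{i-1}}^{t_i} u'(t)\diff{t}\|_H^2$, which is controlled by the quadrature estimate to yield $2h^{2\gamma}|u'|_{C^\gamma}^2 T$.

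For the second bound, I would push the expectation inside the argument of $\|\cdot\|_H$. Using Assumption~\ref{ass:expectation}, namely $\E_{\xi_i}[f_{\xi_i}(t_i)] = f(t_i)$ in $H$ and $\E_{\xi_i}[A_{\xi_i}(t_i)u(t_i)] = A(t_i)u(t_i)$ (valid since $u(t_i)\in V$), the stochastic part collapses and the argument reduces exactly to $u'(t_i) - \frac{1}{h_i}\int_{t_{i-1}}^{t_i} u'(t)\diff{t}$. Applying the same quadrature estimate as above then gives the claim directly, with no additional factor of $2$ because there is no splitting step this time.

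The only subtle point is the interchange of $\E_{\xi_i}$ with $A_{\xi_i}(t_i)u(t_i)$ in $H$, not merely in $V^*$: Assumption~\ref{ass:expectation} yields the identity in $V^*$, but the norm here is the $H$-norm and the integrability $A_{\xi_i}(t_i)u(t_i) \in L^2(\Omega;H)$ is supplied by the hypothesis in Theorem~\ref{thm:convH} where this lemma is invoked. Everything else is a routine combination of Cauchy--Schwarz/Jensen (for pulling the norm inside the integral) and the H\"older bound on $u'$, so I do not anticipate a genuinely hard step.
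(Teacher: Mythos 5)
Your proposal is correct and follows essentially the same route as the paper: insert $\pm\bigl(f(t_i)-A(t_i)u(t_i)\bigr)$ and split with the factor $2$ for the first bound, use $\E_{\xi_i}[f_{\xi_i}(t_i)]=f(t_i)$ and $\E_{\xi_i}[A_{\xi_i}(t_i)u(t_i)]=A(t_i)u(t_i)$ for the second, and in both cases reduce the deterministic remainder via $f(t)-A(t)u(t)=u'(t)$ to a quadrature error controlled by the H\"older continuity of $u'$. Your remark on justifying the expectation identity in $H$ (via $A_{\xi_i}(t_i)u(t_i)\in L^2(\Omega;H)$) is a point the paper leaves implicit, but it is not a different method.
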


\begin{proof} 
  To prove the first bound, we find that
  \begin{align*}
    & \sum_{i=1}^{N} h_i \E_{\xi_i} \Big[ \Big\| f_{\xi_i}(t_i) - \Ai u(t_i) 
    - \frac{1}{h_i} \int_{t_{i-1}}^{t_i} \big(f(t) - A(t) u(t)\big) \diff{t} 
    \Big\|_H^{2} \Big]\\
    &\leq 2 \sum_{i = 1}^{N} h_i \E_i \big[ \big\| f_{\xi_i}(t_i) - \Ai u(t_i)
    - (f(t_i) - A(t_i) u(t_i)) \big\|_H^{2}\big]\\
    &\quad + 2 \sum_{i = 1}^{N} h_i \E_i \Big[ \Big\| \frac{1}{h_i} 
    \int_{t_{i-1}}^{t_i} \big( f(t_i) - A(t_i) u(t_i) - (f(t) - A(t) u(t)) \big) \diff{t} \Big\|_H^{2}\Big]\\
    &\leq 2 \sum_{i = 1}^{N} h_i \E_i \big[ \big\| f_{\xi_i}(t_i) - \Ai u(t_i)
    - (f(t_i) - A(t_i) u(t_i))  \big\|_H^{2}\big]\\
    &\quad + 2 \sum_{i = 1}^{N} \frac{1}{h_i} \Big\| \int_{t_{i-1}}^{t_i} (u'(t_i) - u'(t)) 
    \diff{t} \Big\|_H^{2}.
  \end{align*}
  To further bound the last row, we apply H\"older's inequality and the regularity condition 
  $u' \in C^{\gamma}([0,T];H)$. We then find that
  \begin{align*}
    2 \sum_{i = 1}^{N} \frac{1}{h_i} \Big\| \int_{t_{i-1}}^{t_i} (u'(t_i) - u'(t)) 
    \diff{t} \Big\|_H^{2}
    &\leq 2 \sum_{i = 1}^{N} \int_{t_{i-1}}^{t_i} \| u'(t_i) - u'(t)\|_H^{2} \diff{t}\\
    &\leq 2 h^{2 \gamma} |u'|_{C^{\gamma}([0,T];H)}^{2} T.
  \end{align*}
  It remains to prove the second estimate of the lemma. 
  Recall that $\E_{\xi_i} \big[ f_{\xi_i}(t_i) \big] = f(t_i)$ and $\E_{\xi_i} \big[ \Ai u(t_i)\big] 
  = A(t_i) u(t_i)$ is fulfilled by Assumption~\ref{ass:expectation}. Using these equalities,
  it follows that
  \begin{align*}    
    &\sum_{i=1}^{N} h_i \Big\| \E_{\xi_i} \Big[ f_{\xi_i}(t_i) - \Ai u(t_i) 
    - \frac{1}{h_i} \int_{t_{i-1}}^{t_i} \big(f(t) - A(t) u(t)\big) \diff{t}  \Big] \Big\|_H^2\\
    &= \sum_{i=1}^{N} h_i \Big\| f(t_i) - A(t_i) u(t_i) - \frac{1}{h_i} 
    \int_{t_{i-1}}^{t_i} ( f(t) - A(t) u(t) ) \diff{t}\Big\|_H^2\\
    &\leq \sum_{i=1}^{N}  \int_{t_{i-1}}^{t_i} \| u'(t_i) - u'(t) \|_H^2 \diff{t}
    \leq h^{2\gamma} |u'|_{C^{\gamma}([0,T];H)}^2 T.
  \end{align*}
\end{proof}

\addcontentsline{toc}{chapter}{Bibliography}

\end{document}